\newtheorem{theorem}{Theorem}[section]
\theoremstyle{definition}
\newtheorem{definition}[theorem]{Definition}
\newtheorem{example}[theorem]{Example}
\newtheorem{cor}[theorem]{Corollary}
\theoremstyle{remark}
\newcommand{\ack}{\noindent \textbf{Acknowledgements. }}
\newcommand{\org}{\noindent \textbf{Organization. }}
\DeclareMathOperator{\Ehr}{Ehr}
\DeclareMathOperator{\ehr}{\varepsilon}
\DeclareMathOperator{\T}{Top}
\newcommand{\conv}{\operatorname{Conv}}
\numberwithin{equation}{section}
\title[New recursion formula for the interior polynomial]{New recursion formula for the interior polynomial\\ based on non-expanding sets}
\author{Keiju Kato}
\date{\today}
\email{kato.k.at@m.titech.ac.jp}
\address{Department of Mathematics, Institute of Science Tokyo, Oh-okayama 2-12-1, Meguro-ku, Tokyo 152-8551, Japan}
\begin{document}
\maketitle

\begin{abstract}
The interior polynomial was originally defined for hypergraphs and later shown to coincide with the Ehrhart polynomial of the root polytope of an associated bipartite graph. In previous work, we derived an alternating cycle recursion formula for the interior polynomial. Here, we introduce a new, more transparent recursion formula based on the structure of non-expanding sets. This formula offers a clearer combinatorial interpretation of the interior polynomial and its connection to polyhedral geometry.
\end{abstract}

\section{Introduction}
In this paper, we investigate properties of the interior polynomial, which is an invariant of bipartite graphs. Initially, the interior polynomial was defined by K\'alm\'an as an invariant of hypergraphs \cite{K}. Here a hypergraph $\mathscr{H}=(V,W)$ has a vertex set $V$ and a hyperedge set $W$, where $W$ is a multiset of non-empty subsets of $V$. Originally, the interior polynomial was defined as the polynomial invariant associated with hypergraphs, but as shown by the main result of \cite{KP}, it can also be regarded as the polynomial invariant of the corresponding bipartite graph $H$ with color classes $V$ and $W$. The author extended the interior polynomial to signed bipartite graphs, that is, bipartite graphs $G$ with a sign $E \to \{ +1 , -1 \}$, where $E$ is the set of edges in the bipartite graph $G=(V\sqcup W, E)$ \cite{kato}. The signed interior polynomial $I^+_G$ is constructed as an alternating sum of the interior polynomials of the bipartite graphs obtained from $G$ by deleting some negative edges and disregarding the sign.

The interior polynomial is related to a part of the HOMFLY polynomial. The HOMFLY polynomial \cite{homfly} is a two-variable invariant of oriented links in $S^3$ defined by the skein relation
\[
v^{-1} P_{\includegraphics[width=0.35cm]{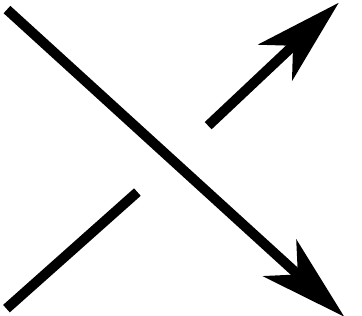}}(v,z)-
v      P_{\includegraphics[width=0.35cm]{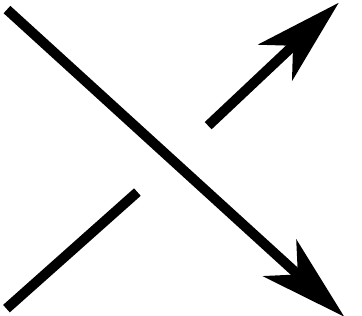}}(v,z)=
z      P_{\includegraphics[width=0.35cm]{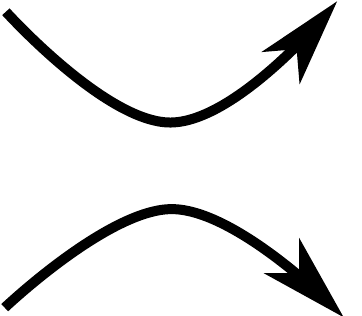}}(v,z),
\]
and the initial condition $P_{\mbox{\footnotesize unknot}}(v,z)=1$. Morton \cite{morton} showed that for any oriented link diagram $D$, the maximal $z$-exponent in the HOMFLY polynomial of an oriented link diagram $D$ is less than or equal to $c(D)-s(D)+1$, where $c(D)$ is the crossing number of $D$ and $s(D)$ is the number of its Seifert circles. We call the coefficient of $z^{c(D)-s(D)+1}$ (which is a polynomial in $v$) the top of the HOMFLY polynomial, denoted by $\T_D(v)$. When $G=(V\sqcup W,E_+\sqcup E_-)$ is a signed plane bipartite graph, the sequence of coefficients of $I^+_G(x)$ agrees with that of $\T_{L_G}(v)$, where $L_G$ is the special link diagram with Seifert graph $G$ \cite{kato}. More precisely,
\[
\T_{L_G}(v)=v^{|E_{+}|-|E_{-}|-(|V|+|W|)+1}I^{+}_{G}(v^2 ).
\]
This correspondence follows from its special case when $G$ is a positive graph, which in turn is established in two steps. First, the interior polynomial of $G$ is equivalent to the Ehrhart polynomial of the root polytope of $G$ \cite{KP}. The latter can be regarded as an h-vector \cite{KP} and coincides with $\T_{L_G}$ \cite{KM}.

Originally, the interior polynomial was defined as a generalization of $T_G(1/x,1)$, where $T_G(x,y)$ is the Tutte polynomial of a graph $G$. The Tutte polynomial has a deleting-contracting formula: for a graph $G$ and an edge $e$ which is neither a bridge nor a loop, $T_G = T_{G-e}+T_{G/e}$, where the graph $G-e$ is obtained from $G$ by deleting $e$ and $G/e$ is obtained from $G$ by contracting $e$. Using this formula, the Tutte polynomial can be computed recursively. In \cite{K}, we obtained a recursion formula for the interior polynomial based on alternating cycles (see Theorem \ref{thm:altrec}).

To obtain a new, more transparent recursion formula, we exploit the concept of a non-expanding set. A non-expanding set $S\subseteq V$ is one whose neighborhood is nonempty and no larger than itself, i.e.,
\[
0<|N_G(S)|\leq |S| .
\]
We also require that $S$ contains no isolated vertices of $G$. This property allows us to decompose the graph into induced subgraphs by deleting subsets of $S$, and, as a consequence, express the interior polynomial as an alternating sum over these subgraphs. The main result of this paper is the following.
\begin{theorem}\label{main}
Let $G=(V\sqcup W, E)$ be a bipartite graph with vertex set $V\sqcup W$ separated by color and let $S\subseteq V$ be a non-expanding set of $G$. Then, we have
\[
\sum_{J\subseteq S}(-1)^{|J|}I_{G- J}(x)=0,
\]
where $G-J$ is the bipartite graph obtained from $G$ by deleting all vertices in $J$ and all incident edges.
\end{theorem}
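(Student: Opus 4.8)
The plan is to recast the identity as an inclusion--exclusion and then force the cancellation by hand, working with K\'alm\'an's hypertree description of the interior polynomial \cite{K} and using the polyhedral model of \cite{KP} as a guide. Write $I_{G-J}(x)=\sum_{\mathbf f}x^{\bar\iota(\mathbf f)}$, the sum over hypertrees $\mathbf f$ of $G-J$ weighted by the number of internally inactive hyperedges with respect to a fixed linear order on $W$ (recall this count is order-independent). Then the left-hand side of the theorem is the signed generating function $\sum_{J\subseteq S}(-1)^{|J|}\sum_{\mathbf f}x^{\bar\iota(\mathbf f)}$ over $\mathcal T:=\bigsqcup_{J\subseteq S}\{\text{hypertrees of }G-J\}$. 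Dually, deleting $v\in V$ slices the root polytope by $\{x_v=0\}$, so $Q_{G-J}=Q_G\cap\bigcap_{v\in J}\{x_v=0\}$ is a face of $Q_G$ and the $\{Q_{G-J}\}_{J\subseteq S}$ form a cube-shaped sub-poset of faces of $Q_G$; this geometric shadow of the signed sum is what suggests seeking a sign-reversing involution on $\mathcal T$.

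The core step is to build a fixed-point-free, sign-reversing involution on $\mathcal T$ that preserves internal inactivity. The elementary move should transfer one unit of the hypertree vector $\mathbf f\in\mathbb Z^W$ from one hyperedge to another in such a way that the ambient graph loses or regains exactly one vertex of $S$, flipping the parity of $|J|$ while keeping $\mathbf f$ a legitimate hypertree. That such a transfer always exists and can be pinned down canonically (e.g.\ by a lexicographically least witness) is exactly where the hypothesis enters: the inequality $|S|\ge|N_G(S)|$ defining a non-expanding set, via a Hall-type deficiency argument on the bipartite graph spanned by $S$ and $N_G(S)$, produces a ``slack'' vertex of $S$ that can be detached and guarantees enough room in $N_G(S)$ to reabsorb it. On the polyhedral side the same inequality is what makes the telescoping of $(1-x)$-factors -- forced on one by the drops in dimension of the faces $Q_{G-J}$ -- collapse to $0$.

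The main obstacle I anticipate is checking that this involution really does preserve internal inactivity and really is fixed-point free. Internal activity is defined relative to the fixed order on $W$ and is fragile under the transfer move, so the transfer has to be chosen delicately, and one will probably want to invoke the order-independence of $I$ and argue through a conveniently adapted order. A secondary, more clerical difficulty is that $G-J$ may become disconnected or acquire isolated $W$-vertices -- which must be discarded, the empty graph carrying $I=1$ -- so the hypertree conditions and the polytope identification both need their degenerate-case conventions handled consistently; this is most delicate in the ``tight'' case $|S|=|N_G(S)|$, where the non-expanding inequality has to be exploited at full strength rather than in a weakened, purely local form. If the involution proves too intricate to nail down directly, a fallback is to derive the formula from the alternating-cycle recursion of Theorem~\ref{thm:altrec} by realising the deletion of a non-expanding set as a finite telescoping iteration of that earlier identity.
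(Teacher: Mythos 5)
There is a genuine gap: the central object of your argument --- the fixed-point-free, sign-reversing involution on $\mathcal T=\bigsqcup_{J\subseteq S}\{\text{hypertrees of }G-J\}$ --- is never constructed. You describe what properties it should have (flip the parity of $|J|$, preserve internal inactivity, be canonical) and you correctly identify that the non-expanding hypothesis must enter through a Hall-type deficiency argument, but the ``transfer move'' is left entirely unspecified, and you yourself flag that its compatibility with internal activity is the hard point. As written this is a plan, not a proof. Moreover, the plan has a structural obstruction that you dismiss as ``clerical'': once vertices of $S$ are deleted, $G-J$ is typically disconnected or acquires isolated vertices, and the paper's extended interior polynomial then satisfies $I_{G_1\cup G_2}=(1-x)I_{G_1}I_{G_2}$ and $I_G=(1-x)^{c(G)-1}$ for edgeless graphs --- polynomials with \emph{negative} coefficients (see Table~\ref{tab:rec}, where summands such as $1-2x+x^2$ occur). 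These are not generating functions over hypertrees with nonnegative weights, so the terms of your alternating sum are not all honest sets of combinatorial objects and a sign-reversing involution on $\mathcal T$ cannot by itself account for the cancellation. Your fallback via Theorem~\ref{thm:altrec} is likewise only gestured at.

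For comparison, the paper avoids the combinatorial model entirely and works with the Ehrhart-series definition (Definition~\ref{thm:disserint}), which is what makes the disconnected degenerations harmless. It first proves the ``tight matched'' case $|S|=|N_G(S)|$ with a perfect matching $f:S\to N_G(S)$ (Theorem~\ref{thm:special}) by a genuinely geometric cancellation: the trimmed polytopes $\widetilde Q_U=Q_G\setminus\bigcup_{v\in U}F_v$, dilated by $k+|U|$ and translated by $\mathbf v_{S\setminus U}+\mathbf w_{f(S\setminus U)}$, form a nested family whose indicator functions telescope to zero. The general case is then reduced to this one by two purely formal steps: induction on the deficiency $|S|-|N_G(S)|$ (splitting the sum over $J\ni v$ and $J\not\ni v$), and, when no perfect matching exists, an application of Hall's Theorem to extract a violating subset $X\subsetneq S$ over which the inner alternating sum already vanishes. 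Your instinct to invoke Hall is aligned with the paper's second reduction, but the analogue of Theorem~\ref{thm:special} --- where the real work happens --- is missing from your proposal, so it cannot be accepted as a proof.
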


In addition, it is known that evaluating the interior polynomial at \(x=1\) yields the volume of the associated root polytope. In~\cite{LP}, the authors prove a vanishing result for a similar alternating sum of the volumes of root polytopes for certain subgraphs, which corresponds precisely to the \(x=1\) specialization of Theorem~\ref{main}. Hence, Theorem~\ref{main} extends their result from the volume case to a broader framework encompassing the entire interior polynomial.

\org In Section \ref{sec:int}, we review the necessary definitions and properties of the interior polynomial, including its interpretation via the Ehrhart series of root polytopes. In Section \ref{sec:specialproof}, we establish a special case of our main theorem by proving the alternating sum identity for non‐expanding sets under the bijection condition. In Section \ref{sec:proofmain}, we extend this argument to non-expanding sets without the bijection condition using induction and Hall’s Theorem. Finally, in Section \ref{sec:computation}, we illustrate the recursion formula with several examples, including the computation of the interior polynomial for complete bipartite graphs.

\ack I would like to express my sincere gratitude to Professor Tam\'as K\'alm\'an, my doctoral advisor, for his many comments and insightful feedback, which have been instrumental in the development of this paper. I also thank Kouki Sato for his careful reading of the proof and valuable suggestions.

\section{Preliminaries}\label{sec:int}
In \cite{KP,kato}, it was shown that the interior polynomial of a bipartite graph $G$ coincides with the Ehrhart polynomial of its root polytope. In this section, we review the relevant definitions and properties, beginning with the definition of the root polytope.

\begin{definition}
Let $G=(V\sqcup W, E)$ be a bipartite graph. For $v \in V$ and $w \in W$, let {\bf v} and {\bf w} denote the corresponding standard generators (standard basis vectors) of $\mathbb{R}^V \oplus \mathbb{R}^W$. Define the root polytope of $G$ by
\[
Q_G=\conv \{\, {\bf v} + {\bf w} \mid vw \mbox{ is an edge of }G \,\}.
\]
\end{definition}

It is known that if $G$ is connected, then the root polytope $Q_G$ has dimension $\dim Q_G = |V| + |W| - 2$, as shown in \cite{P}. Throughout this paper, we set $d = |V| + |W| - 2$.

\begin{definition}
Let $G=(V\sqcup W,E)$ be a bipartite graph and $Q_G$ be the root polytope of $G$. For any positive integer $s$, the Ehrhart polynomial is defined by
\[
\ehr_{Q_G}(s) = |(s\cdot Q_G) \cap (\mathbb{Z}^{V}\oplus\mathbb{Z}^{W})|.
\]
\end{definition}

In general, for any polytope $P$, the function $\varepsilon_P(s)$ defined analogously is not necessarily a polynomial. However, if $P$ is a convex polytope with integer vertices, $\varepsilon_P(s)$ becomes a polynomial. In particular, $\varepsilon_{Q_G}(s)$ is a polynomial.

\begin{definition}
Let $G$ be a bipartite graph, and let $\varepsilon_{Q_G}(s)$ be the Ehrhart polynomial of the root polytope $Q_G$. The Ehrhart series is defined by
\[
\Ehr_{Q_G}(x)=1+\sum_{s\in \mathbb{N}}\varepsilon_{Q_G}(s)x^s.
\]
\end{definition}

Notice that, for a (bipartite) graph with no edges, we have $\Ehr_{Q_G}(x)=1$. It was shown that the Ehrhart series of the root polytope $Q_G$ is equivalent to the interior polynomial of the bipartite graph $G$.

\begin{theorem}\label{lem:serint}
Let $G=(V\sqcup W,E)$ be a connected bipartite graph and $I_G(x)$ be the interior polynomial of $G$. Then, the following holds:
\[
\frac{I_G(x)}{(1-x)^{|V|+|W|-1}}=\Ehr_{Q_G}(x).
\]
\end{theorem}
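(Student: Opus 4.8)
The plan is to read the identity as the assertion that the interior polynomial is exactly the $h^*$-polynomial of the root polytope, and to prove it by computing that $h^*$-polynomial from a unimodular triangulation of $Q_G$. The first step is formal. Since $G$ is connected we have $\dim Q_G = |V|+|W|-2 =: d$ by \cite{P}, so the denominator $(1-x)^{|V|+|W|-1}$ is precisely $(1-x)^{d+1}$. By Stanley's theory of Ehrhart series, every $d$-dimensional lattice polytope $P$ satisfies $\Ehr_P(x)=h^*_P(x)/(1-x)^{d+1}$ for a polynomial $h^*_P$ of degree at most $d$ with non-negative integer coefficients. Clearing denominators, the theorem is therefore equivalent to the single polynomial identity $I_G(x)=h^*_{Q_G}(x)$.

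To access $h^*_{Q_G}(x)$ combinatorially, I would invoke Postnikov's description of root polytopes: $Q_G$ admits a unimodular triangulation $\mathcal{T}$ whose maximal simplices have the form $\sigma_T=\conv\{\,\mathbf v+\mathbf w : vw\in T\,\}$ for spanning trees $T$ of $G$, and the number of these simplices — the normalized volume of $Q_G$ — equals the number of hypertrees of the associated hypergraph. Because $\mathcal{T}$ is unimodular, Stanley's theorem identifies $h^*_{Q_G}(x)$ with the $h$-polynomial of the simplicial complex $\mathcal{T}$; and because a triangulated polytope is a shellable ball, this $h$-polynomial is computed from any shelling by $h^*_{Q_G}(x)=\sum_{\sigma\in\mathcal T}x^{|R(\sigma)|}$, where $R(\sigma)$ is the restriction (minimal new) face of $\sigma$ in the shelling order.

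The heart of the proof — and the step I expect to be the main obstacle — is to choose a shelling order of $\mathcal{T}$ for which the restriction statistic $|R(\sigma_T)|$ agrees with the internal-inactivity invariant $\bar\iota(f_T)$ that defines $I_G$, where $f_T$ records the degrees of $T$ on the color class $W$. Concretely, I would fix a linear order on $E$, use it to induce a line shelling (or the associated lexicographic order) of $\mathcal{T}$, and then verify facet by facet that crossing the wall from $\sigma_T$ back to its restriction face corresponds exactly to deactivating the internally active hyperedges of the hypertree $f_T$. Establishing that this ordering is genuinely a shelling, and that the degree-sequence map $T\mapsto f_T$ matches restriction sets with internal activity, is delicate: it requires controlling the exchange of edges across the triangulation walls and showing that the activity statistic, which is \emph{a priori} order-dependent, is tracked faithfully by the shelling.

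Once the two statistics are matched, summing over all maximal simplices gives $\sum_{\sigma}x^{|R(\sigma)|}=\sum_f x^{\bar\iota(f)}=I_G(x)=h^*_{Q_G}(x)$, and multiplying back by $(1-x)^{-(d+1)}$ yields $\Ehr_{Q_G}(x)=I_G(x)/(1-x)^{|V|+|W|-1}$. If a clean shelling proves hard to exhibit, an alternative is a direct half-open decomposition of the cone over $Q_G$ into the simplicial cones spanned by the $\sigma_T$, computing $\sum_{s\ge 0}\varepsilon_{Q_G}(s)\,x^s$ as a sum of geometric series $x^{\bar\iota(f_T)}/(1-x)^{d+1}$ and checking the exponent bookkeeping directly on lattice points.
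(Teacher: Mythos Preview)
The paper does not prove this theorem. Immediately after stating it, the author writes ``Theorem~\ref{lem:serint} is implicit in \cite{KP}'' and then \emph{uses} the identity as the definition of $I_G$ for possibly disconnected bipartite graphs (Definition~\ref{thm:disserint}). There is thus no proof here to compare against; the result is imported as a black box from K\'alm\'an--Postnikov.

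Your outline is, in fact, a faithful sketch of the argument one finds in that reference: one builds a unimodular triangulation of $Q_G$ indexed by spanning trees, produces a shelling, and identifies the shelling restriction statistic with the internal-inactivity count on hypertrees, so that the $h$-vector of the triangulation (equivalently $h^*_{Q_G}$) coincides with $I_G$. The step you correctly flag as the crux --- that some natural order on the maximal simplices is genuinely a shelling and that $|R(\sigma_T)|=\bar\iota(f_T)$ --- is exactly the substantive content of \cite{KP}. So your proposal is sound but amounts to reconstructing the cited result rather than matching anything the present paper does on its own.
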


Theorem \ref{lem:serint} is implicit in \cite{KP}. The author extended this result to any (unsigned but possibly disconnected) bipartite graph. In this paper, we use the theorem in \cite{kato} as the definition of the interior polynomial for an arbitrary bipartite graph.

\begin{definition}[\cite{kato}]\label{thm:disserint}
Let $G=(V\sqcup W,E)$ be a bipartite graph. The interior polynomial for any bipartite graph is defined as $I_G(x)$ satisfying the following equality.
\[
\frac{I_G(x)}{(1-x)^{|V|+|W|-1}}=\Ehr_{Q_G}(x).
\]
\end{definition}

If $G_1$ and $G_2$ are bipartite graphs and $G_1\cup G_2$ is the disjoint union of $G_1$ and $G_2$, then we remark that $I_{G_1\cup G_2}(x)=(1-x)I_{G_1}I_{G_2}$. Moreover, this definition of the interior polynomial enabled us to derive a recursion formula for it.

\begin{theorem}[\cite{kato}]\label{thm:altrec}
If the distinct edges $e_1, e_{n+1},e_2, e_{n+2},\cdots,e_n,e_{2n}$ form a cycle in a bipartite graph $G$, then we have
\[
I_G(x)=\sum_{\emptyset\neq S\subseteq \{e_1,e_2,\cdots,e_n\}}(-1)^{|S|-1}I_{G\setminus S}(x).
\]

\begin{figure}[H]
\centering
\includegraphics[width=3cm]{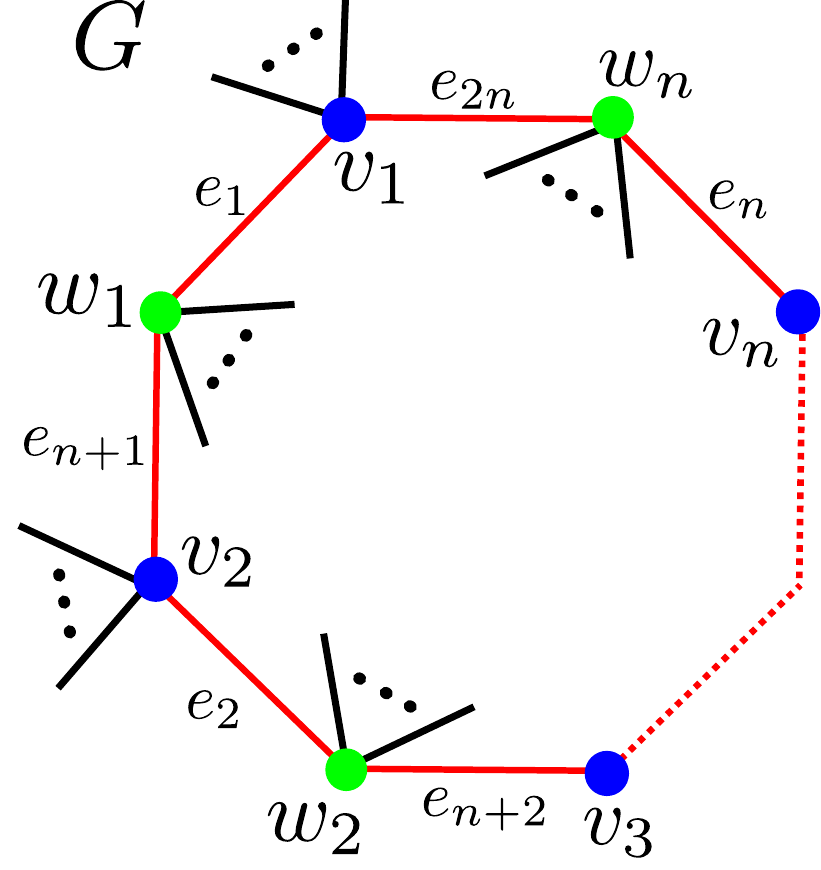}
\caption{Alternating cycle.}
\label{altcyc}
\end{figure}

\end{theorem}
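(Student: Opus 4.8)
The plan is to move to the Ehrhart side through Definition \ref{thm:disserint} and prove the equivalent identity $\sum_{S\subseteq T}(-1)^{|S|}I_{G\setminus S}(x)=0$, where $T=\{e_1,\dots,e_n\}$ and $G\setminus S$ deletes the edges in $S$. Deleting edges leaves the vertex set untouched, so every $G\setminus S$ has the same $|V|+|W|$ and hence the same denominator $(1-x)^{|V|+|W|-1}$; dividing through reduces the claim to $\sum_{S\subseteq T}(-1)^{|S|}\Ehr_{Q_{G\setminus S}}(x)=0$. Matching coefficients, the constant terms give $\sum_{S\subseteq T}(-1)^{|S|}=(1-1)^{n}=0$, so it remains to show $\sum_{S\subseteq T}(-1)^{|S|}\varepsilon_{Q_{G\setminus S}}(s)=0$ for each $s\ge 1$.

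Since $Q_{G\setminus S}\subseteq Q_G$, all the lattice points involved lie in $sQ_G$, and I would regroup the alternating sum over those points. For $p\in (sQ_G)\cap(\Z^V\oplus\Z^W)$ put $\mathcal F_p=\{S\subseteq T : p\in sQ_{G\setminus S}\}$, so that $\sum_{S\subseteq T}(-1)^{|S|}\varepsilon_{Q_{G\setminus S}}(s)=\sum_{p}\,\sum_{S\in\mathcal F_p}(-1)^{|S|}$, and it suffices to prove $\sum_{S\in\mathcal F_p}(-1)^{|S|}=0$ for every $p$. Writing $\partial(\mathbf 1_{vw})=\mathbf v+\mathbf w$, membership $p\in sQ_{G\setminus S}$ means there is a flow $\mu\in\R^{E}_{\ge 0}$ with $\partial\mu=p$, $\sum_e\mu_e=s$ and $\mu_e=0$ for $e\in S$; hence $\mathcal F_p$ is a down-closed family, and the vanishing of its signed face count will follow once I show $\mathcal F_p$ is the full Boolean lattice $2^{T_p}$ on the set $T_p=\{e_i\in T : p\in sQ_{G\setminus e_i}\}$ of individually avoidable odd edges, with $T_p\neq\emptyset$.

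Nonemptiness is where the cycle first enters: let $c\in\R^{E}$ be $+1$ on $e_1,\dots,e_n$, $-1$ on $e_{n+1},\dots,e_{2n}$, and $0$ elsewhere. Since the cycle alternates colors, each cycle vertex meets one edge of each sign, so $\partial c=0$ and $\sum_e c_e=0$; thus $\mu+\theta c$ realizes $p$ for all feasible $\theta$, and decreasing $\theta$ lowers $\mu_{e_1},\dots,\mu_{e_n}$ uniformly while raising the even-indexed coordinates, so feasibility is lost only when the smallest odd coordinate hits $0$. This produces a flow avoiding some $e_i$, giving $T_p\neq\emptyset$. The remaining and main obstacle is the identity $\mathcal F_p=2^{T_p}$, i.e. that individually avoidable odd edges are simultaneously avoidable. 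Here I would use the transportation (Hall–Gale) criterion: $p=(a,b)$ fails to lie in $sQ_{G\setminus S}$ iff some $A\subseteq V$ has $\sum_{v\in A}a_v>\sum_{w\in N_{G\setminus S}(A)}b_w$. As $T$ is a matching along the cycle, deleting $S$ drops from $N_G(A)$ only those $w_{(i)}$ (with $e_i=v_{(i)}w_{(i)}\in S$) whose unique $A$-neighbor is $v_{(i)}$; but the even edge $e_{n+i}$ joins $w_{(i)}$ to $v_{(i+1)}$, so any such deficiency forces $v_{(i)}\in A$ and $v_{(i+1)}\notin A$. A joint-infeasibility witness $A$ thus generates a cyclic descent pattern among the $v_{(i)}$ that the even edges obstruct, and the plan is to uncross such a witness—via submodularity of the neighborhood-mass function $A\mapsto\sum_{w\in N_G(A)}b_w$—down to a single-index witness, contradicting individual avoidability. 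Equivalently, the circulation $c$ forces the all-ones direction into the affine hull of the projection $\pi_T(R_p)$ of the flow polytope $R_p$, and it is exactly this constraint that excludes the non-simplicial patterns (such as $\{\emptyset,\{e_1\},\{e_2\}\}$) that would otherwise make $\sum_{S\in\mathcal F_p}(-1)^{|S|}\neq 0$; the case $n=2$, where the single even edge $e_3$ alone defeats the only bad pattern, is the instructive model for the general uncrossing.
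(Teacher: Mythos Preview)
The paper does not contain a proof of Theorem~\ref{thm:altrec}; it is stated in the preliminaries with the citation \cite{kato} and is used only as motivation for the new recursion of Theorem~\ref{main}. So there is no in-paper argument to compare your proposal against.

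On its own merits, your reduction is sound up to the point where you isolate, for each lattice point $p\in sQ_G$, the signed sum $\sum_{S\in\mathcal F_p}(-1)^{|S|}$ and claim it vanishes because $\mathcal F_p=2^{T_p}$ with $T_p\neq\emptyset$. The nonemptiness of $T_p$ via the alternating circulation $c$ is fine. The genuine gap is the equality $\mathcal F_p=2^{T_p}$: you assert that individually avoidable odd edges are simultaneously avoidable, but the justification (``uncross such a witness via submodularity of the neighborhood-mass function'') is a plan, not an argument. Concretely, you would need to show that if $A_i$ witnesses infeasibility for $G\setminus\{e_i\}$ for each $i$ in some $S\subseteq T_p$, then some single set $A$ witnesses infeasibility for $G\setminus\{e_j\}$ for one $j$, contradicting $e_j\in T_p$. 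Submodularity of $A\mapsto\sum_{w\in N_G(A)}b_w$ lets you replace $A_i,A_j$ by $A_i\cap A_j,\,A_i\cup A_j$ while preserving a deficit, but you have not explained why the cycle structure of $T$ forces this uncrossing to terminate at a \emph{single-edge} witness rather than, say, a witness that still needs two deletions from $S$. The parenthetical about $n=2$ and the remark that ``the circulation $c$ forces the all-ones direction into the affine hull of $\pi_T(R_p)$'' are suggestive but do not close this; in particular, $c$ lies in the affine span of $R_p-R_p$ only when some realization of $p$ is strictly positive on all $2n$ cycle edges, which need not hold. Until this step is written out, the proposal remains a sketch rather than a proof.
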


One of the goals in this paper is to obtain a different recursion formula from Theorem \ref{thm:altrec}.

\section{Special case of Theorem \ref{main}}\label{sec:specialproof}
In this section, we prove a special case of Theorem \ref{main}. First, we precisely define what we mean by a non-expanding set.

\begin{definition}
For a vertex $ v\in V $, we define
\[
N_G(v) = \{ w \in W \mid w \text{ is adjacent to } v \}.
\]
For $ S\subseteq V $, we define
\[
N_G(S) = \bigcup_{v\in S}N_G(v).
\]
\end{definition}

\begin{definition}
We say a vertex set $S\subseteq V$ is a non-expanding set if 
\[
|S|\ge |N_G(S)|.
\]
\end{definition}

We can associate to $S$ a function $f \colon S \to N_G(S)$ by choosing, for each $v \in S$, a neighbor $f(v) \in N_G(S)$. If $f$ is a bijection, then it forms a perfect matching between $S$ and $N_G(S)$.

For any $J \subseteq V$, the bipartite graph $G-J$ is defined from $G$ by deleting all vertices in $J$ and all incident edges. The following theorem plays a key role in proving Theorem~\ref{main}.

\begin{theorem}\label{thm:special}
Let $G=(V\sqcup W,E)$ be a bipartite graph with the vertex set $V\sqcup W$ separated by color. If we have $|S|=|N_G(S)|$ and there exists a bijection $f:S\to N_G(S)$, then we have
\[
\sum_{J\subseteq S}(-1)^{|J|}I_{G- J}(x)=0,
\]
where $G-J$ is the bipartite graph obtained from $G$ by deleting all vertices in $J$ and all incident edges.
\end{theorem}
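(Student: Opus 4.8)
The plan is to work with the Ehrhart-series formulation from Definition \ref{thm:disserint}. Since $I_{G-J}(x) = (1-x)^{|V \setminus J| + |W| - 1} \Ehr_{Q_{G-J}}(x)$, and the exponent of $(1-x)$ drops by exactly $|J|$ as we pass from $G$ to $G-J$, the claimed identity is equivalent to
\[
\sum_{J \subseteq S} (-1)^{|J|} (1-x)^{-|J|} \Ehr_{Q_{G-J}}(x) = 0,
\]
after dividing through by the common factor $(1-x)^{|V|+|W|-1}$. Unwinding $\Ehr$ as a generating function in a size parameter $s$, this in turn reduces to a purely lattice-point statement: for every $s \geq 1$, the alternating sum $\sum_{J \subseteq S}(-1)^{|J|}(1-x)^{-|J|}$ applied coefficientwise to the counts $|(s \cdot Q_{G-J}) \cap \mathbb{Z}^{V \oplus W}|$ must vanish. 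My first step is therefore to expand $(1-x)^{-|J|}$ as a power series and reorganize, so that the identity becomes a finite linear relation among the Ehrhart polynomials $\varepsilon_{Q_{G-J}}$ with binomial coefficients — concretely, I expect it to take the shape $\sum_{J}(-1)^{|J|}\binom{\text{something}}{\text{something}}\,\varepsilon_{Q_{G-J}}(s - \text{shift}) = 0$ for all $s$.

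The combinatorial heart of the argument is a lattice-point cancellation using the bijection $f \colon S \to N_G(S)$. Each lattice point of $s \cdot Q_G$ is an integer vector $p = (p_V, p_W) \in \mathbb{Z}^V \oplus \mathbb{Z}^W$ whose coordinates are nonnegative, sum to $s$ on each side, and which is supported (in an appropriate sense) on the edges of $G$. Deleting a vertex $v \in J$ from $G$ forces the $v$-coordinate of any lattice point of $s \cdot Q_{G-J}$ to be $0$ and removes the edges at $v$. The key idea is to set up, for each lattice point $p$ of $s \cdot Q_G$, an involution or a sign-reversing pairing on the subsets $J \subseteq S$ for which $p$ survives in $s \cdot Q_{G-J}$ (possibly after also incorporating the $(1-x)^{-1}$ factors, which geometrically correspond to the "freedom" coordinates that the dimension count $|V|+|W|-1$ encodes). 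Using $f$, the natural move is: given $p$, look at the vertices $v \in S$ with $p_v = 0$; the matched neighbors $f(v)$ then behave predictably, and one can toggle membership of such $v$ in $J$ to produce the cancellation. I would formalize this by partitioning the relevant lattice points according to which coordinates in $S$ vanish, and showing each block contributes zero to the alternating sum.

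The main obstacle I anticipate is handling the $(1-x)^{-|J|}$ weights correctly — that is, matching the drop in the exponent of $(1-x)$ against the combinatorics of deleting vertices. Equivalently, one must be careful that $Q_{G-J}$ lives in a lower-dimensional ambient space ($\mathbb{R}^{V \setminus J} \oplus \mathbb{R}^W$), so the naive lattice-point counts are not directly comparable across different $J$; reconciling this is precisely what the $(1-x)^{-|J|}$ factor does, and getting the bookkeeping right is the crux. A clean way around this may be to phrase everything in terms of the $h^*$-polynomials (numerators $I_{G-J}$ themselves) rather than the Ehrhart polynomials, and to use the fact that the bijection $f$ lets us realize each $Q_{G-J}$, for $J \subseteq S$, as a face-like or projection-compatible sub-object of $Q_G$ — at which point the alternating sum collapses by a standard inclusion–exclusion over the coordinate hyperplanes $\{p_v = 0\}_{v \in S}$. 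A secondary point to check is the degenerate situation where $G-J$ becomes disconnected or edgeless, using the remark that $I_{G_1 \cup G_2}(x) = (1-x) I_{G_1}(x) I_{G_2}(x)$ and that $\Ehr_{Q_G}(x) = 1$ when $G$ has no edges; these boundary cases should be consistent with the general formula rather than genuine exceptions.
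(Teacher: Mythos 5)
Your setup coincides with the paper's: rewrite the claim via Definition~\ref{thm:disserint} as $\sum_{J\subseteq S}(-1)^{|J|}(1-x)^{-|J|}\Ehr_{Q_{G-J}}(x)=0$ and look for a lattice-point cancellation driven by the bijection $f$. But the step you yourself flag as ``the crux'' --- reconciling the counts $\ehr_{Q_{G-J}}$ across different $J$, i.e.\ across different powers of $(1-x)$ --- is exactly the step you do not supply, and the involution you do sketch cannot do the job. Toggling $v\in J$ according to whether $p_v=0$, at a \emph{fixed} dilation $s$, is ordinary inclusion--exclusion over the faces $Q_{G-\{v\}}=Q_G\cap\{x_v=0\}$; it evaluates the \emph{unweighted} sum $\sum_{J\subseteq S}(-1)^{|J|}\ehr_{Q_{G-J}}(s)$ as the number of lattice points of $s\cdot Q_G$ with $p_v\ge 1$ for all $v\in S$, which is not zero in general. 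The content of the theorem lives entirely in the $(1-x)^{-|J|}$ weights, which shift the dilation parameter by $|J|$, so the required cancellation must compare lattice points of $(k+|J|)\cdot Q_{G-J}$ for \emph{varying} $|J|$; no sign-reversing pairing on a single dilate can see this.

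The missing idea, which is the heart of the paper's proof, is a dilate-and-translate construction that puts all of these point sets at a common level. For $U\subseteq S$ let $\widetilde{Q}_U$ be $Q_G$ with the facets $Q_{G-\{v\}}$, $v\in U$, removed (so the lattice points of its $s$-dilate are those $p$ with $p_v\ge 1$ for all $v\in U$), and set
\[
\widetilde{Q}^{+}_U=(k+|U|)\,\widetilde{Q}_U+\sum_{v\in S\setminus U}\bigl(\mathbf{v}+\mathbf{w}_{f(v)}\bigr),
\]
where $\mathbf{v}$ and $\mathbf{w}_{f(v)}$ are the standard generators. Because each $v\,f(v)$ is an edge of $G$, every translate lands inside $(k+|S|)\cdot Q_G$, so all the $\widetilde{Q}^{+}_U$ become comparable; they form a family nested according to the Boolean lattice of $S$ (if $U_1\subset U_2$ then $\widetilde{Q}^{+}_{U_1}\subset\widetilde{Q}^{+}_{U_2}$), the alternating sum of their indicator functions vanishes, and unwinding the inner inclusion--exclusion recovers $\sum_{J\subseteq S}(-1)^{|J|}I_{G-J}(x)=0$ coefficient by coefficient. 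This is where the bijection $f$ is genuinely used --- to manufacture one translation vector $\mathbf{v}+\mathbf{w}_{f(v)}$ per deleted vertex, injectively on the $W$-side --- not merely to ``toggle'' coordinates. Your proposal correctly identifies the reformulation and the obstacle, but without this (or an equivalent) device it does not yet constitute a proof.
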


Before outlining the proof, we first present a concrete example to illustrate the key idea.

\begin{example}\label{computation}
Let $G$ be the bipartite graph shown in Figure \ref{fig:72}. If $S=\{v_0, v_1\}$, then Theorem \ref{thm:special} holds, as shown in Table~\ref{tab:1}.

\begin{figure}[H]
\centering
\includegraphics[width=2cm]{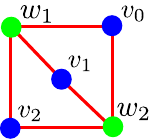}
\caption{A complete bipartite graph $K_{23}$.}
\label{fig:72}
\end{figure}

\begin{table}[htbp]
\centering
\caption{A computation of main theorem.}\label{tab:1}
\begin{tabular}{lllllllll}
& &  & & & $(-1)^{|J|}I_{G - J}$ \\
\hline 
\\ 

&\hspace{30pt}\includegraphics[width=0.75cm]{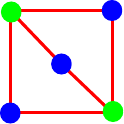}
&                                       &
&         
&\raisebox{+8pt}{\hspace{3pt}$1+2x$}   
&\raisebox{+8pt}{$\times 1$}\\

&\hspace{30pt}$G-\emptyset$  
&                                     &
&         
&  
&\\

& \includegraphics[width=0.75cm]{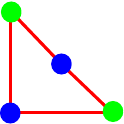}
& \includegraphics[width=0.75cm]{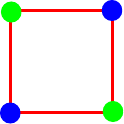}& 
&\raisebox{+8pt}{$-$\hspace{-10pt}}&\raisebox{+8pt}{$(1+1x)$}
           & \raisebox{+8pt}{$\times 2$}\\  
           
&\hspace{-5pt}$G-\{v_0\}$& \hspace{-5pt}$G-\{v_1\}$
                                    
&         
&  
&\\
           
&\hspace{30pt} \includegraphics[width=0.75cm]{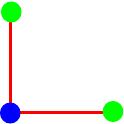}
& & 
& &\raisebox{+8pt}{$1$}
           & \raisebox{+8pt}{$\times 1$}\\  

&\hspace{20pt}$G-\{v_0,v_1\}\hspace{-10pt}$  
&                                     &
&         
&  
&\\

\hline \\
 & &\hspace{-50pt}$\displaystyle\sum_{J\subseteq S}(-1)^{|J|}I_{G-J}(x)$&$=$& &\hspace{3pt}$0$
\end{tabular}
\end{table}

\begin{figure}[htbp]
\begin{center}
\begin{tabular}{cc}
\hspace{-55pt}
\begin{minipage}{0.65\hsize}
\begin{center}
\includegraphics[height=3.5cm]{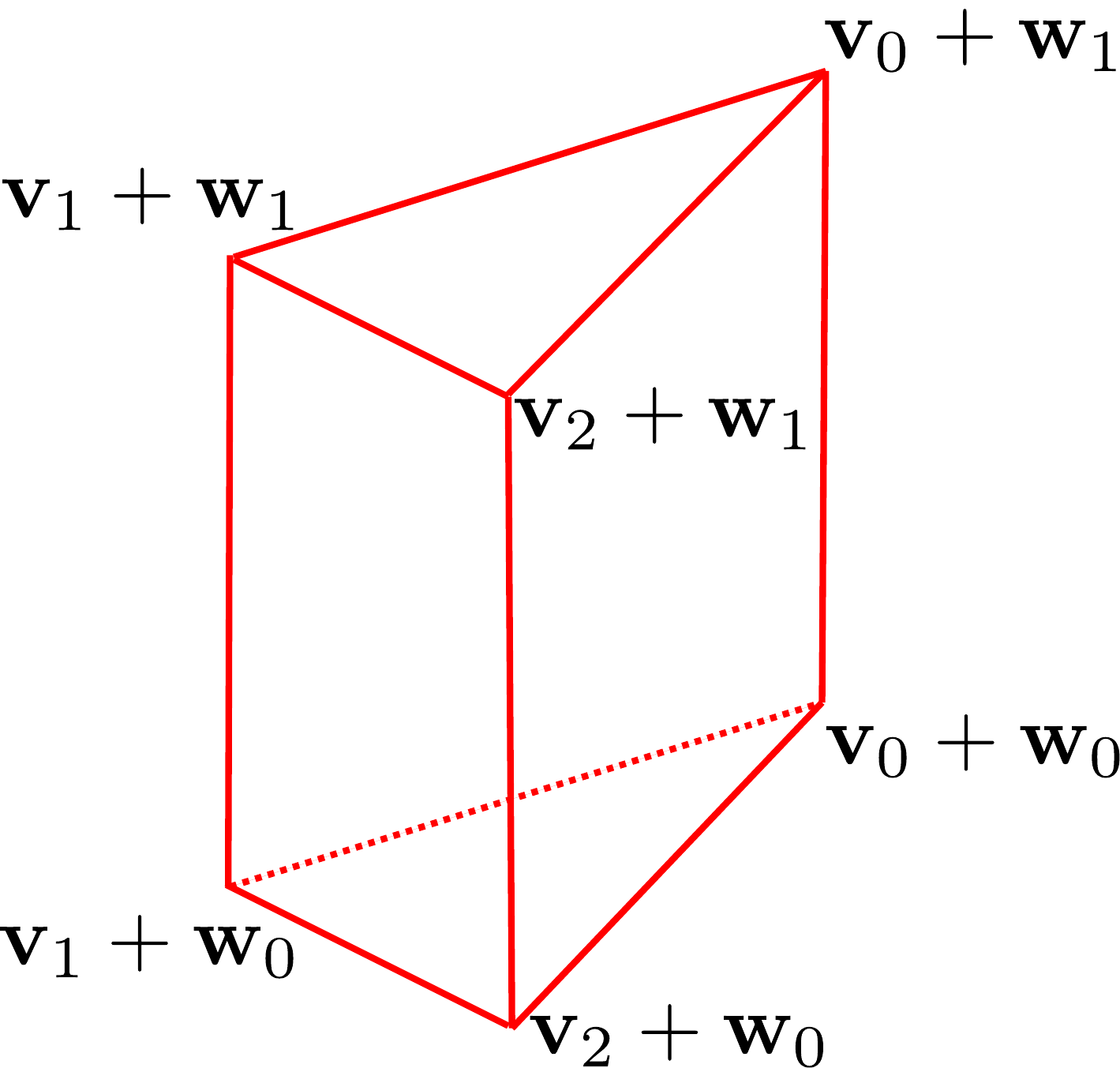}
\caption{The root polytope $Q_{K_{23}}$.}
\label{fig:P}
\end{center}
\end{minipage}
\hspace{-35pt}
\begin{minipage}{0.65\hsize}
\begin{center}
\includegraphics[height=3.5cm]{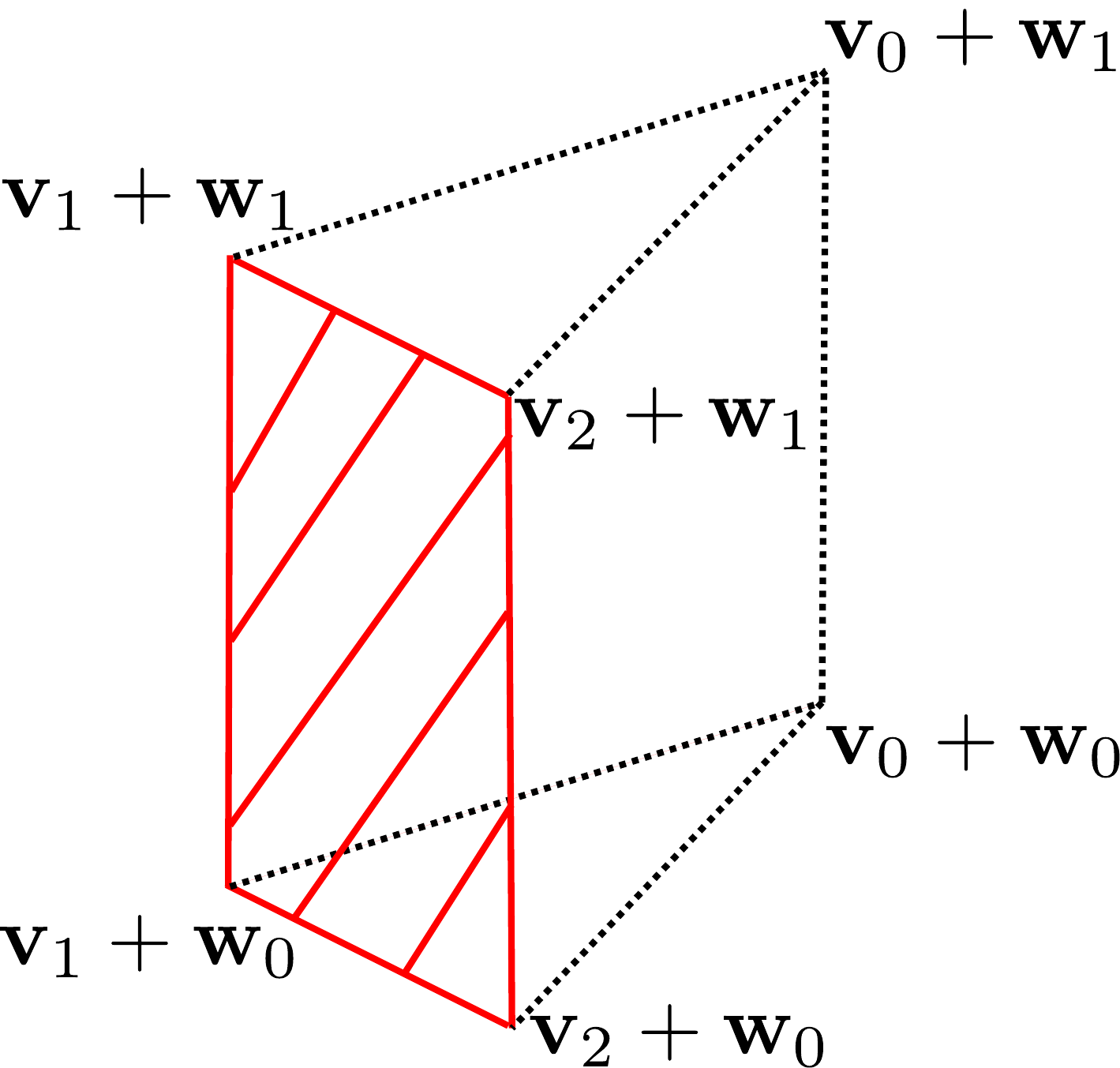}
\caption{The root polytope $Q_{K_{23}-\{v_0\}}$.}
\label{fig:P1}
\end{center}
\end{minipage}
\end{tabular}
\end{center}
\begin{center}
\begin{tabular}{cc}
\hspace{-35pt}
\begin{minipage}{0.65\hsize}
\begin{center}
\includegraphics[height=3.5cm]{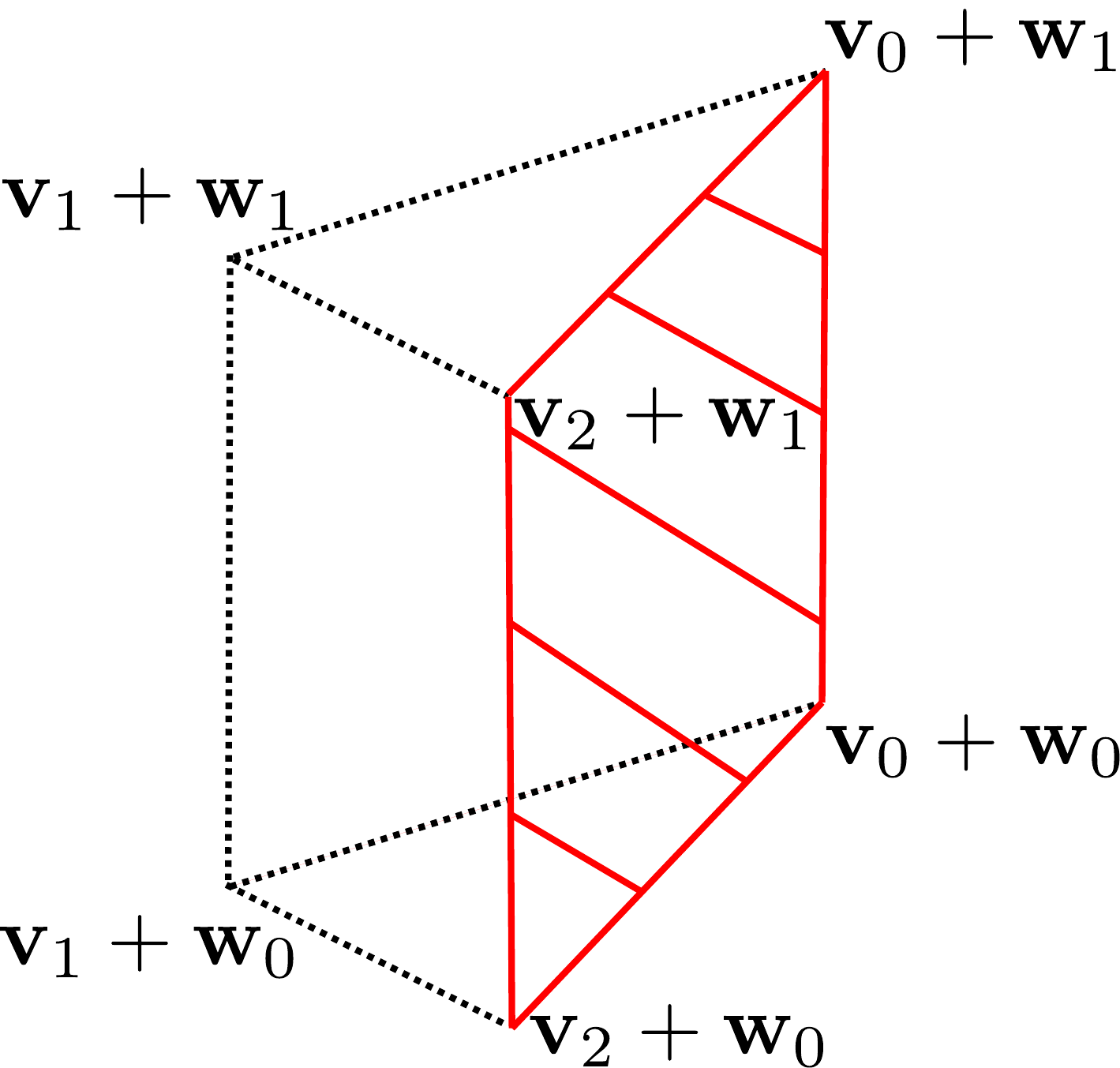}
\caption{The root polytope $Q_{K_{23}-\{v_1\}}$.}
\label{fig:P2}
\end{center}
\end{minipage}
\hspace{-25pt}
\begin{minipage}{0.65\hsize}
\begin{center}
\includegraphics[height=3.5cm]{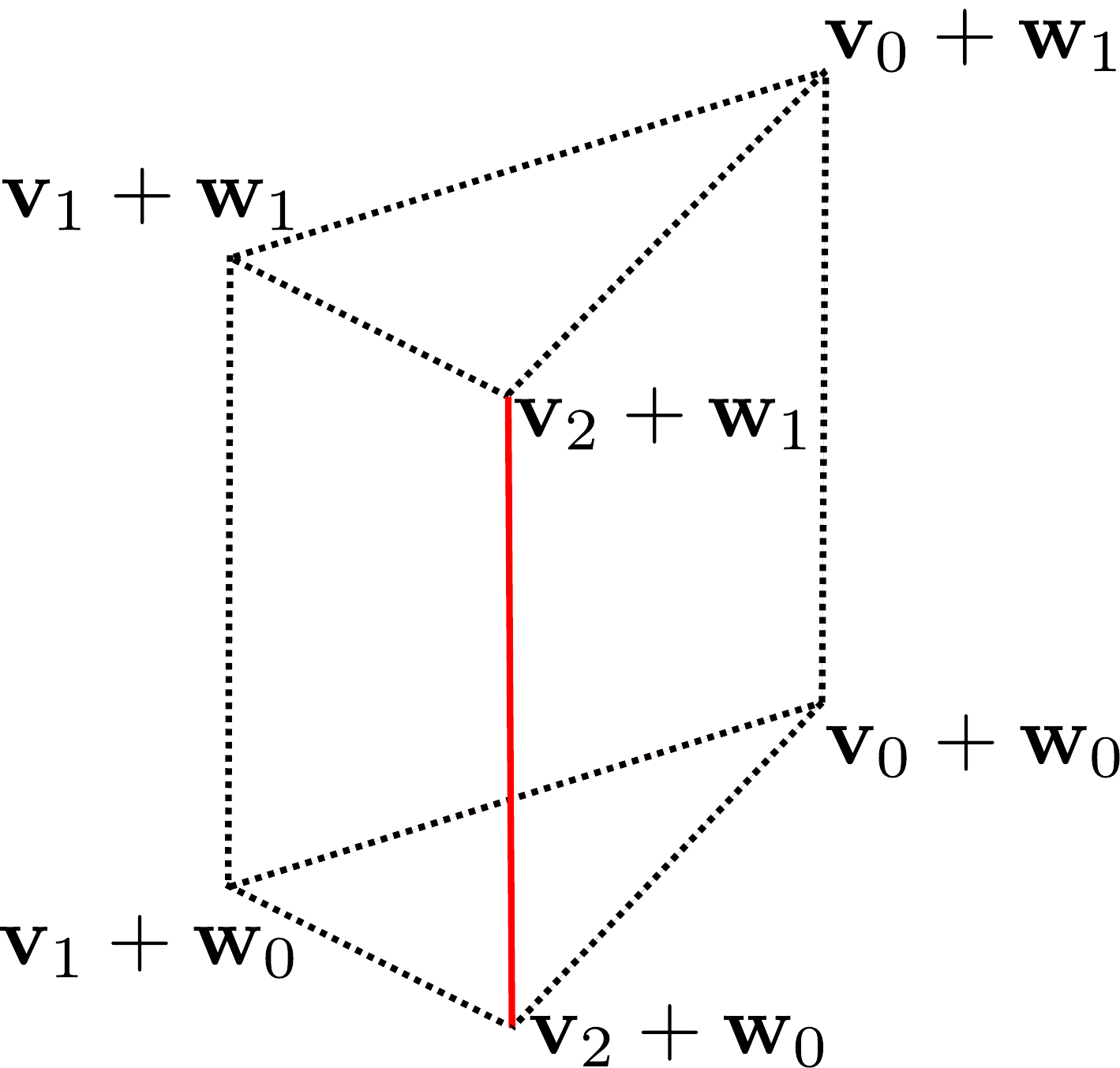}
\caption{The root polytope $Q_{K_{23}-\{v_0,v_1\}}$.}
\label{fig:P3}
\end{center}
\end{minipage}
\end{tabular}
\end{center}

\end{figure}

We now analyze this example through its Ehrhart series. Specifically, we compute the Ehrhart series of the root polytopes associated with the graphs in Table~\ref{tab:1}; see also Figures~\ref{fig:P} -- \ref{fig:P3}. We obtain
\[
\begin{split}
\frac{1+2x}{(1-x)^4} &= \Ehr_{Q_{K_{23}}}(x), \quad
\frac{1+x}{(1-x)^3} = \Ehr_{Q_{K_{23}-\{v_0\}}}(x), \\[1mm]
\frac{1+x}{(1-x)^3} &= \Ehr_{Q_{K_{23}-\{v_1\}}}(x), \quad
\text{and} \quad \frac{1}{(1-x)^2} = \Ehr_{Q_{K_{23}-\{v_0,v_1\}}}(x).
\end{split}
\]

Using the relation 
\[
\sum_{J\subseteq S}(-1)^{|J|}I_{G-J}(x)=0
\]
in this example, we obtain the following identity:
\[
(1-x)^2\Ehr_{Q_{K_{23}}}-(1-x)\Ehr_{Q_{K_{23}-\{v_0\}}}-(1-x)\Ehr_{Q_{K_{23}-\{v_1\}}}+\Ehr_{Q_{K_{23}-\{v_0,v_1\}}}=0.
\]
Considering the coefficient of $x^{k+2}$, we obtain
\begin{eqnarray*}
            \ehr_{Q_{K_{23}}}(k+2)&-2\ehr_{Q_{K_{23}}}(k+1) &+\ehr_{Q_{K_{23}}}(k)\\
   -\ehr_{Q_{K_{23}-\{v_0\}}}(k+2)&+\ehr_{Q_{K_{23}-\{v_0\}}}(k+1)& \\
   -\ehr_{Q_{K_{23}-\{v_1\}}}(k+2)&+\ehr_{Q_{K_{23}-\{v_1\}}}(k+1)& \\
+\ehr_{Q_{K_{23}-\{v_0,v_1\}}}(k+2).&
\end{eqnarray*}
We consider polytopes of the same size together and write
\[
\begin{split}
\widetilde{Q}_{01} &= Q_{K_{23}} - \Bigl( Q_{K_{23}-\{v_0\}} \cup Q_{K_{23}-\{v_1\}} \Bigr), \quad
\widetilde{Q}_0 = Q_{K_{23}} - Q_{K_{23}-\{v_0\}},\\[1mm]
\widetilde{Q}_1 &= Q_{K_{23}} - Q_{K_{23}-\{v_1\}}, \quad \text{and} \quad
\widetilde{Q}_\emptyset = Q_{K_{23}}.
\end{split}
\]
These polytopes are illustrated in Figures~\ref{fig:W}--\ref{fig:W3}. For instance, when considering the lattice point count in the $(k+2)$-dilation, it may initially seem necessary to compute the contributions from the four polytopes
\[
Q_{K_{23}},\quad Q_{K_{23}-\{v_0\}},\quad Q_{K_{23}-\{v_1\}},\quad \text{and} \quad Q_{K_{23}-\{v_0,v_1\}}.
\]
However, as a result of a useful cancellation, it suffices to count the lattice points in the single region $\widetilde{Q}_{01}$. Equivalently, the identity to be established can be written as
\[
\ehr_{\widetilde{Q}_{01}}(k+2)-\ehr_{\widetilde{Q}_0}(k+1)-\ehr_{\widetilde{Q}_1}(k+1)+\ehr_{\widetilde{Q}_\emptyset}(k)=0.
\]

\begin{figure}[htbp]
\begin{center}
\begin{tabular}{cc}
\hspace{-55pt}
\begin{minipage}{0.7\hsize}
\begin{center}
\includegraphics[height=3.5cm]{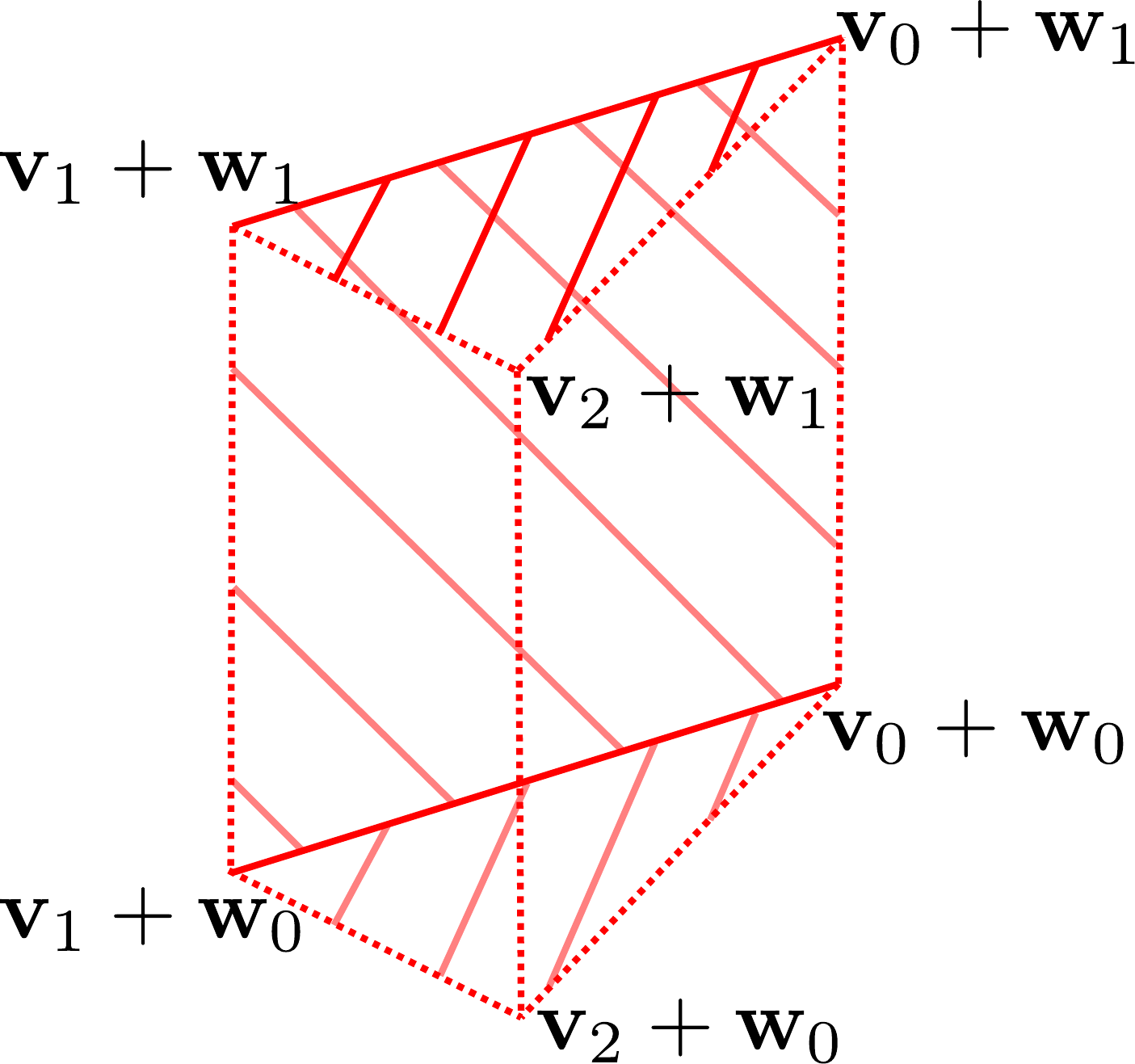}
\hspace{-30pt}\caption{The polytope $Q_{K_{23}}-(Q_{K_{23}-\{v_0\}}\cup Q_{K_{23}-\{v_1\}})$.}
\label{fig:W}
\end{center}
\end{minipage}
\hspace{-40pt}
\begin{minipage}{0.7\hsize}
\begin{center}
\includegraphics[height=3.5cm]{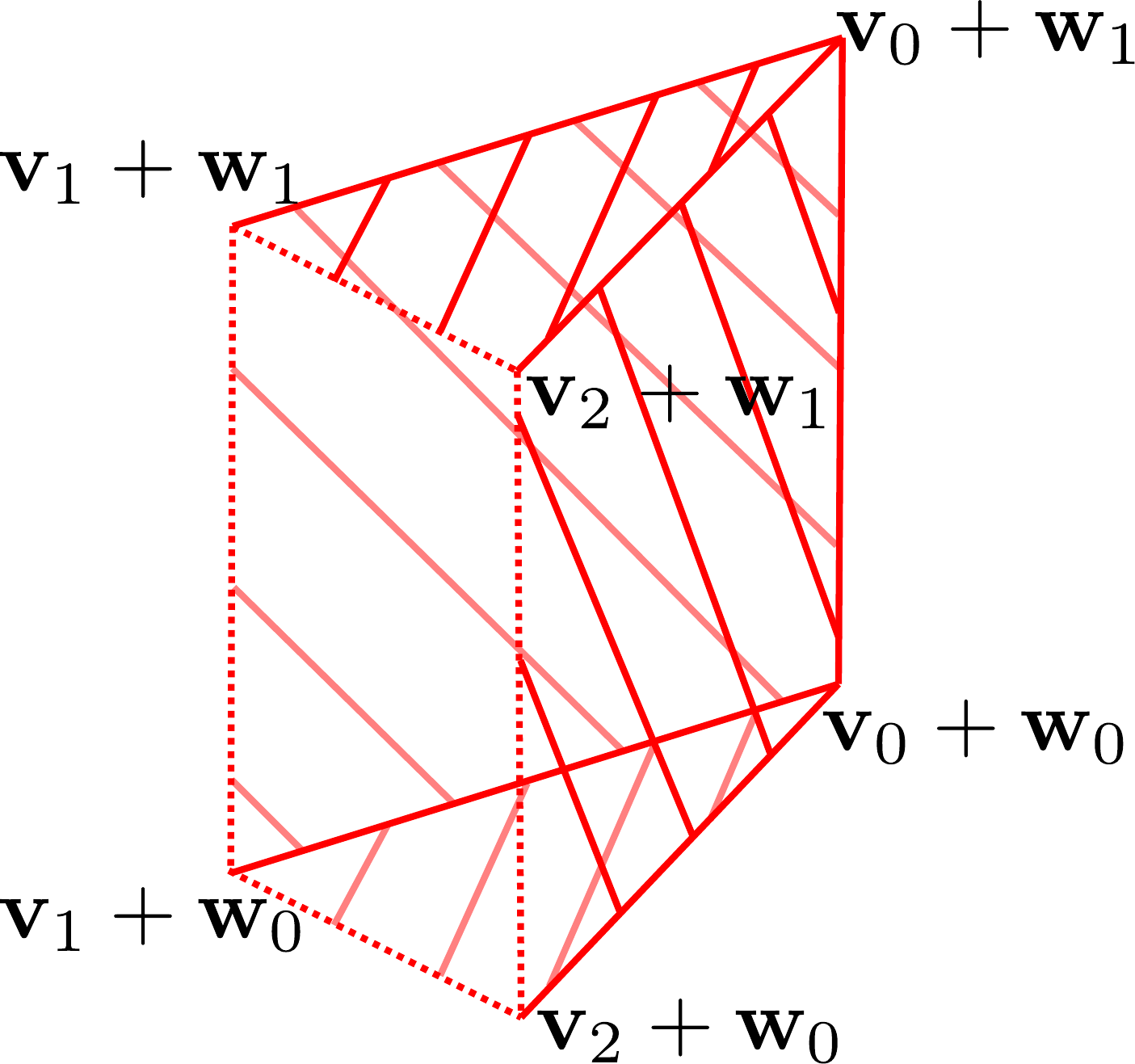}
\caption{The root polytope $Q_{K_{23}}-Q_{K_{23}-\{v_0\}}$.}
\label{fig:W0}
\end{center}
\end{minipage}
\end{tabular}
\end{center}
\begin{center}
\begin{tabular}{cc}
\hspace{-65pt}
\begin{minipage}{0.7\hsize}
\begin{center}
\includegraphics[height=3.5cm]{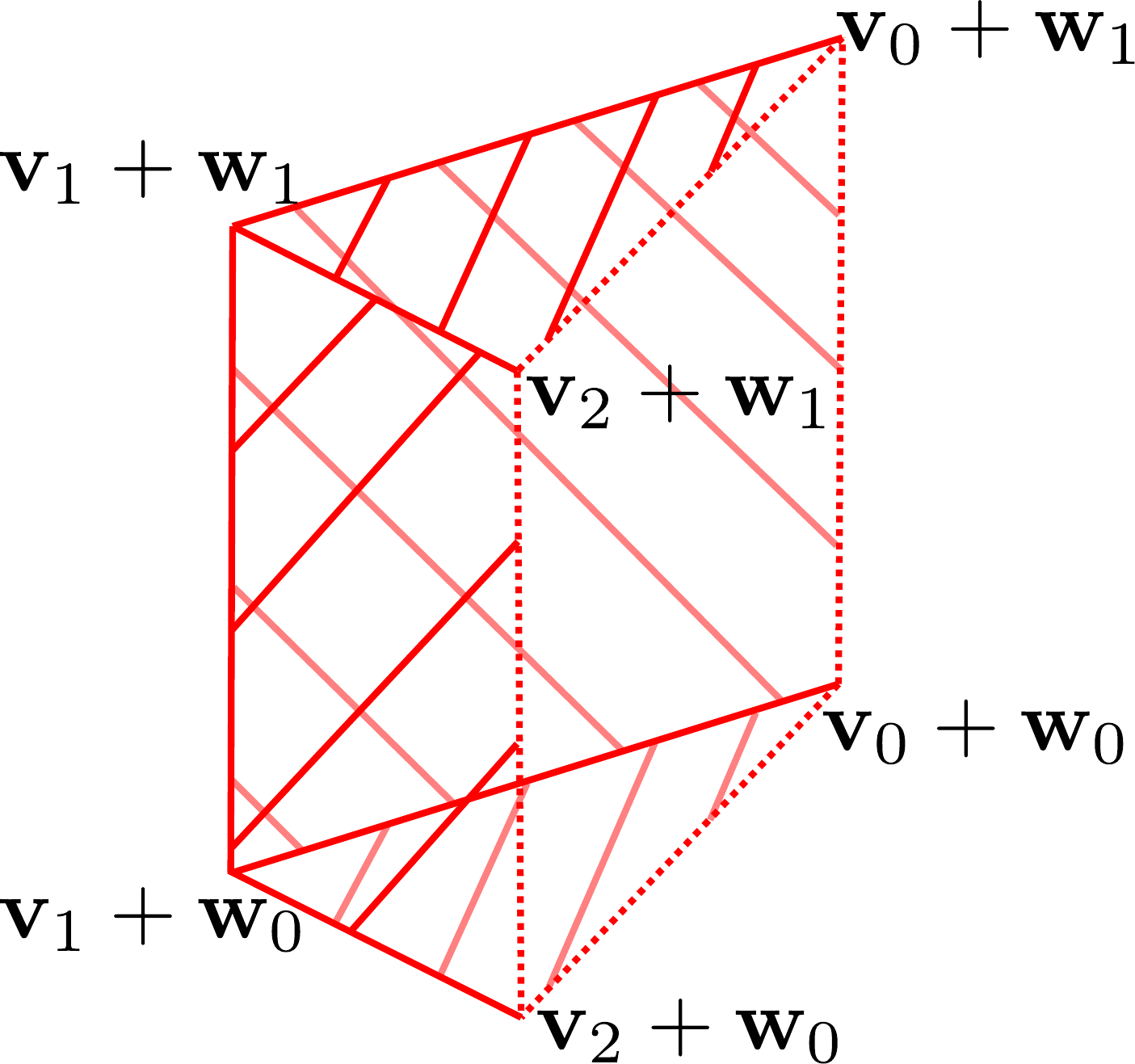}
\caption{The root polytope $Q_{K_{23}}-Q_{K_{23}-\{v_1\}}$.}
\label{fig:W1}
\end{center}
\end{minipage}
\hspace{-40pt}
\begin{minipage}{0.7\hsize}
\begin{center}
\includegraphics[height=3.5cm]{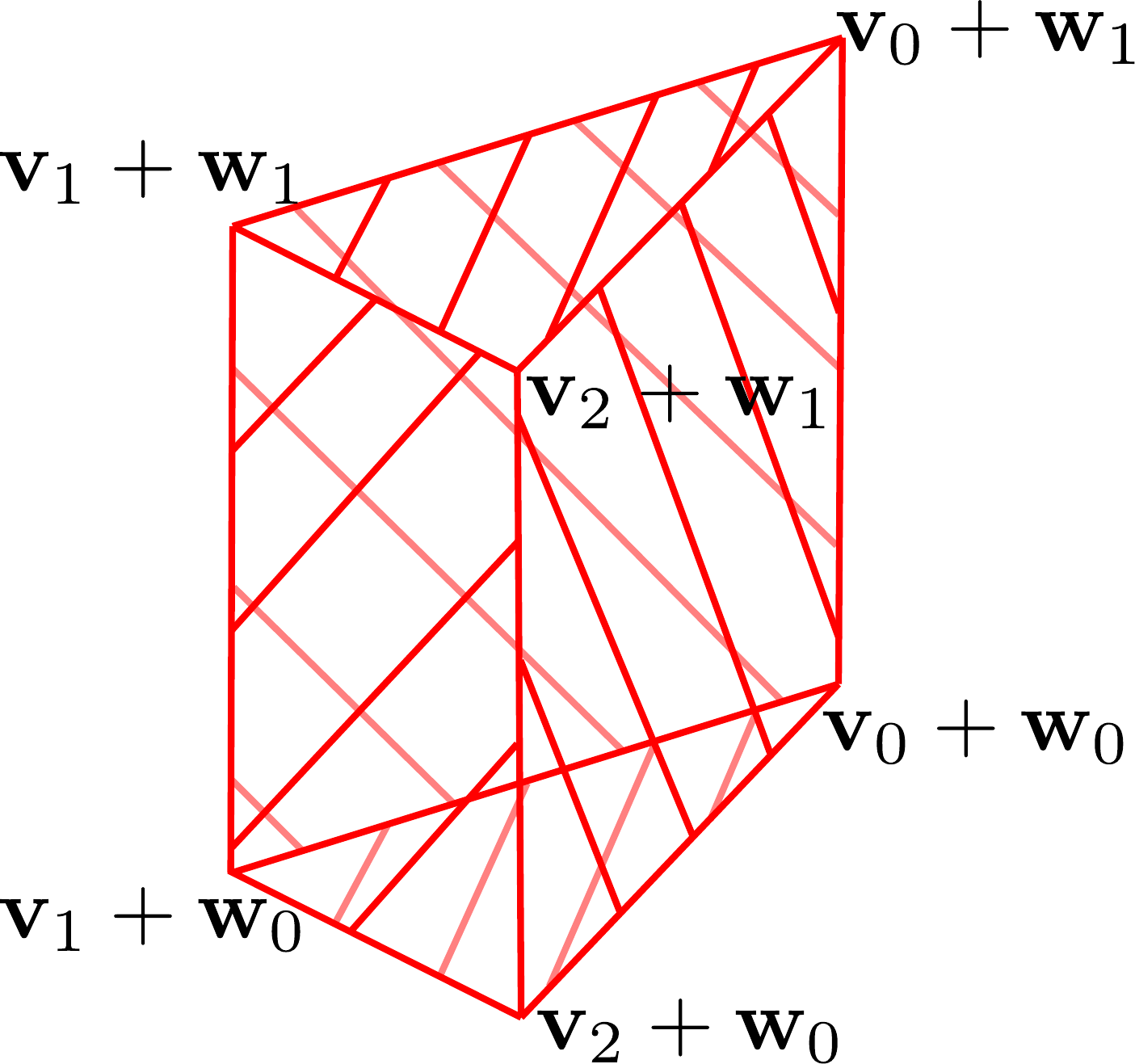}
\caption{The root polytope $Q_{K_{23}}$.}
\label{fig:W3}
\end{center}
\end{minipage}
\end{tabular}
\end{center}

\end{figure}

This identity admits a natural geometric interpretation. Here, $\ehr_{\widetilde{Q}_{01}}(k+2)$ counts the number of lattice points in the $(k+2)$-dilation of the region $\widetilde{Q}_{01}$. Meanwhile, we shift the other regions inside the dilated set
\[
\widetilde{Q}_{01}^+ = (k+2)\cdot\widetilde{Q}_{01}
\]
as follows:
\[
\widetilde{Q}_0^+ = (k+1)\cdot\widetilde{Q}_0 + \mathbf{v}_1 + \mathbf{e}_1,\quad
\widetilde{Q}_1^+ = (k+1)\cdot\widetilde{Q}_1 + \mathbf{v}_0 + \mathbf{e}_0,
\]
and
\[
\widetilde{Q}_\emptyset^+ = k\cdot\widetilde{Q}_\emptyset + \mathbf{v}_0 + \mathbf{e}_0 + \mathbf{v}_1 + \mathbf{e}_1.
\]
By construction, these shifted regions overlap in such a way that the alternating sum of their indicator functions cancels out:
\[
\bigl[\widetilde{Q}_{01}^+\bigr]-\bigl[\widetilde{Q}_0^+\bigr]-\bigl[\widetilde{Q}_1^+\bigr]+\bigl[\widetilde{Q}_\emptyset^+\bigr]=0.
\]
Considering the integer points in each region after suitable dilations and shifts, we see that the inclusion-exclusion sum of indicator functions cancels perfectly. Consequently, the alternating sum of their Ehrhart counts also vanishes, illustrating that this example satisfies Theorem \ref{thm:special}.

\end{example}

Theorem \ref{thm:special} is proved by considering indicator functions of lattice points in certain polytopes, as illustrated in Example~\ref{computation}. We establish the necessary notation.

Let $G = (V \sqcup W, E)$ be a bipartite graph, and let $S = \{v_1, \dots, v_n\} \subseteq V$ be a non-expanding set with $|S| = |N_G(S)|$. Denote by $Q_G$ the root polytope of $G$. For each $v_i \in V$, write $F_{v_i} = Q_{G - \{v_i\}}$. Since $G - \{v_i\}$ is a subgraph of $G$, we have $F_{v_i} \subseteq Q_G$. For any subset $U\subseteq S$, define the trimmed polytope
\[
  \widetilde{Q}_U 
  \;=\; 
  Q_G \,\setminus\, \bigcup_{v\in U}F_v.
\]
This polytope is obtained by removing all facets corresponding to the vertices in $U$. In particular, $\widetilde{Q}_\emptyset = Q_G$. We aim to show that the corresponding Ehrhart polynomials satisfy the alternating sum identity
\[
\sum_{U\subseteq S} (-1)^{|S\setminus U|}\,\ehr_{\widetilde{Q}_U}(k+|U|) = 0.
\]
We assume there exists a bijection $f : S \to N_G(S)$, and for any subset $U\subseteq S$, we write
\[
  \mathbf{v}_{S\setminus U} = \sum_{v_i \in S\setminus U} \mathbf{v}_i,
  \quad
  \mathbf{w}_{f(S\setminus U)} 
  = \sum_{w_i \in f(S\setminus U)} \mathbf{w}_i,
\]
where $\mathbf{v}_i$ and $\mathbf{w}_i$ are the standard generators of $\mathbb{R}^V \oplus \mathbb{R}^W$. Using this notation, we define the dilated and shifted polytope
\[
  \widetilde{Q}^+_U \;=\; 
  (k + |U|)\,\widetilde{Q}_U 
  \;+\;
  \mathbf{v}_{S\setminus U}
  \;+\;
  \mathbf{w}_{f(S\setminus U)}.
\]
In what follows, we show that the indicator functions of these polytopes sum to zero by alternating signs.

\begin{proof}[Proof of Theorem \ref{thm:special}]
We begin by noting that if $U_1 \subset U_2$ for subsets $U_1, U_2 \subseteq S$, then the trimmed polytopes satisfy the containment  
\[
\widetilde{Q}_{U_2} \subset \widetilde{Q}_{U_1}.
\]
This follows from the definition  
\[
\widetilde{Q}_U = Q_G \setminus \bigcup_{v \in U} F_v,
\]
which shows that removing more facets increases the number of excluded regions, resulting in a smaller polytope. On the other hand, for each $U \subseteq S$, we define the shifted and dilated polytope
\[
\widetilde{Q}_U^+ = (k+|U|)\,\widetilde{Q}_U + \mathbf{v}_{S\setminus U} + \mathbf{w}_{f(S\setminus U)},
\]
where the dilation factor $k+|U|$ increases as $U$ grows. Although $\widetilde{Q}_{U_2}$ is contained in $\widetilde{Q}_{U_1}$, the larger dilation factor applied to $\widetilde{Q}_{U_2}$ ensures that, after shifting, it expands beyond $\widetilde{Q}_{U_1}^+$. 

Consequently, we obtain the reverse inclusion
\[
\widetilde{Q}_{U_1}^+ \subset \widetilde{Q}_{U_2}^+.
\]

This phenomenon induces a nested structure among the polytopes $\{\widetilde{Q}_U^+\}_{U\subseteq S}$ that parallels the Boolean lattice of subsets of $S$. In such a Boolean-lattice framework, alternating sums over this nested family result in total cancellation. Concretely, we have the identity
\[
\sum_{U \subseteq S}
(-1)^{|S\setminus U|}
\Bigl[\,\widetilde{Q}_U^{+}\Bigr]
= 0.
\]
Summing over all lattice points in these polytopes gives
\[
\sum_{U\subseteq S}(-1)^{|S\setminus U|}\,\ehr_{\widetilde{Q}_U}(k+|U|)
= 0.
\]

Next, we relate this to the interior polynomial. Recall from Definition~\ref{thm:disserint} that if we set $d=|V|+|E|-2$, then for any $U\subseteq S$
\[
\Ehr_{Q_{G-U}}(x)
= \frac{I_{G-U}(x)}{(1-x)^{\,d+1-|U|}},
\]
which implies that
\[
I_{G-U}(x)
= (1-x)^{d+1-|U|}\,\Ehr_{Q_{G-U}}(x).
\]
Thus,
\begin{eqnarray*}
\Biggl(\sum_{U\subseteq S}(-1)^{|U|}I_{G-U}(x)\Biggr)\,\times\,\frac{1}{(1-x)^{d+1}}
&=& \sum_{U\subseteq S}(x-1)^{|U|}\,\Ehr_{Q_{G-U}}(x)\\[1mm]
&=& \cdots\;+\;\Biggl(\sum_{U\subseteq S}(-1)^{|S\setminus U|}\,\ehr_{\widetilde{Q}_U}(k+|U|)\Biggr)x^{k+|U|}+\cdots\\[1mm]
&=& 0.
\end{eqnarray*}
Therefore,
\[
\sum_{U\subseteq S}(-1)^{|U|} I_{G-U}(x)
= 0,
\]
which completes the proof.

\end{proof}

\section{Proof of main theorem}\label{sec:proofmain}

In this section, we extend Theorem \ref{thm:special} to prove the main theorem. To proceed with the proof, we first state Hall’s Theorem, which will play a crucial role in our argument.

\begin{theorem}[Hall's Theorem\cite{Hall}]
Let $G=(V\sqcup W, E)$ be a bipartite graph. There exists a perfect matching $V$ and $W$ if and only if we have $|X|\le |N_G(X)|$ for any $X\subseteq V$.
\end{theorem}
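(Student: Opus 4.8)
The plan is to establish both implications separately, dispatching necessity quickly and concentrating the effort on sufficiency via induction on $|V|$. For necessity, suppose a matching $M$ saturates every vertex of $V$. Given any $X\subseteq V$, the $M$-partners of the vertices of $X$ are $|X|$ distinct vertices of $W$, each lying in $N_G(X)$; hence $|X|\le|N_G(X)|$, which is exactly Hall's condition. This direction requires no real work.

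For sufficiency, I would argue by induction on $n=|V|$, assuming throughout that $|X|\le|N_G(X)|$ holds for every $X\subseteq V$. The base case $n=1$ is immediate: the single vertex has at least one neighbor by Hall's condition and may be matched to it. For the inductive step I would split into two cases according to whether Hall's condition is ever tight on a proper subset. In Case 1, every nonempty proper $X\subsetneq V$ satisfies the strict inequality $|N_G(X)|\ge|X|+1$. Here I pick any $v\in V$, choose any neighbor $w\in N_G(v)$, and match $v$ to $w$. In the graph $G-\{v,w\}$ each $X\subseteq V\setminus\{v\}$ loses at most $w$ from its neighborhood, so $|N_{G-\{v,w\}}(X)|\ge|N_G(X)|-1\ge|X|$; Hall's condition persists, the induction hypothesis supplies a matching of $V\setminus\{v\}$, and adjoining the edge $vw$ saturates $V$.

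Case 2 is where some nonempty proper $A\subsetneq V$ is tight, that is $|N_G(A)|=|A|$. I would split $G$ along this set. On the subgraph induced on $A\cup N_G(A)$, Hall's condition is inherited, so induction yields a matching $M_1$ saturating $A$; since $|N_G(A)|=|A|$, this $M_1$ is in fact a perfect matching between $A$ and $N_G(A)$. On the complementary graph $G'=G-A-N_G(A)$ I must re-verify Hall's condition: for $X\subseteq V\setminus A$ the neighborhood in $G'$ is $N_G(X)\setminus N_G(A)$, and from the identity $N_G(A\cup X)=N_G(A)\cup N_G(X)$ together with Hall's condition applied to the disjoint union $A\cup X$ I would deduce $|A|+|N_{G'}(X)|=|N_G(A\cup X)|\ge|A|+|X|$, whence $|N_{G'}(X)|\ge|X|$. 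Induction then gives a matching $M_2$ saturating $V\setminus A$, and $M_1\cup M_2$ saturates all of $V$.

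The main obstacle is precisely the bookkeeping in Case 2: confirming that Hall's condition genuinely descends to $G'$ after excising the tight pair $A,N_G(A)$. The tightness $|N_G(A)|=|A|$ is exactly what lets the surplus available to $A\cup X$ be transferred onto $X$ alone, and getting this counting right — while also checking that $M_1$ and $M_2$ use disjoint vertex sets so that their union is again a matching — is the delicate part of the argument. The strict-surplus Case 1 is comparatively routine, serving mainly to guarantee that the induction can always be started when no tight proper subset is present.
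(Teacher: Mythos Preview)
Your argument is the standard Halmos--Vaughan inductive proof of Hall's Theorem, and it is correct as written: the necessity direction is trivial, Case~1 handles the strict-surplus situation cleanly, and in Case~2 the tightness $|N_G(A)|=|A|$ is precisely what makes the count $|A|+|N_{G'}(X)|=|N_G(A\cup X)|\ge |A|+|X|$ work, while the disjointness of $N_G(A)$ and $W\setminus N_G(A)$ guarantees $M_1\cup M_2$ is a matching.

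There is, however, nothing to compare against: the paper does not prove Hall's Theorem at all. It merely states the result with a citation to Hall's original 1935 paper and then invokes its contrapositive in the proof of Theorem~\ref{main}. So your proposal supplies strictly more than the paper does; the paper treats Hall's Theorem as a black box from the literature, whereas you give the full textbook proof.
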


We now proceed to prove the main theorem by building upon the previous results. Specifically, we first assume that  $|S|=|N_G(S)|$ and show that this assumption is sufficient to complete the proof. The key steps involve induction on $ |S| - |N_G(S)| $ and utilizing Hall’s Theorem in the context of non-expanding sets.

\begin{proof}[Proof of Theorem \ref{main}]
Firstly, we show that under the assumption $|S| = |N_G(S)|$, it suffices to consider the case where $f: S \to N_G(S)$ is a bijection. By the contraposition of Hall's Theorem, if does not exist a bijective function $f:S\to N_G(S)$ (i.e., there is no perfect matching between $S$ and $N_G(S)$), we have there exists $X\subseteq S$ such that $|X|>|N_G(X)|$. Therefore, we have $X\ne S$, and $X$ is a non-expanding set. We have
\[
\sum_{J\subseteq S}(-1)^{|J|}I_{G-J}(x)
=\sum_{J_2\subseteq S\setminus X}(-1)^{|J_2|}\sum_{J_1\subseteq X}(-1)^{|J_1|}I_{(G-J_2)-J_1}(x).
\]
Obviously, we have $N_G(X)=N_{G-J_2}(X)$. So, the set $X$ is non-expanding set of $G-J_2$. Therefore we get
\[
\sum_{J\subseteq S}(-1)^{|J|}I_{G-J}(x)=0.
\]

Secondly, we prove by induction on $n = |S|$ that it suffices to treat the case $|S| = |N_G(S)|$. When $|S|=1$, write $S=\{v\}$.  Since $N_G(S)\neq\emptyset$ by definition, we have $|N_G(S)|=1$ and hence $|S|=|N_G(S)|$. Theorem~\ref{thm:special} thus applies immediately. Assume $|S|=k>1$.  By the induction hypothesis, the identity holds for every non-expanding set whose cardinality is strictly less than $k$. If $|S| - |N_G(S)| = 0$, then the claim follows from the reduction to the bijective case above. Otherwise $|S|-|N_G(S)|>0$, so pick any $v\in S$.  Then
\[
|S\setminus\{v\}|=k-1,
\qquad
|N_G(S\setminus\{v\})|\le|N_G(S)|\le k-1,
\]
hence $S\setminus\{v\}$ is a non‐expanding set both in $G$ and in $G-\{v\}$. Therefore, we have
\begin{eqnarray*}
\sum_{J\subseteq S}(-1)^{|J|}I_{G-J}(x)
&=&\sum_{J\subseteq S \atop v\in J}(-1)^{|J|}I_{G-J}(x)+\sum_{J\subseteq S\atop v\notin J}(-1)^{|J|}I_{G-J}(x)\\
&=&-\sum_{J_1\subseteq S \setminus \{v\}}(-1)^{|J_1|}I_{(G-\{v\})-J_1}(x)+\sum_{J_2\subseteq S\setminus \{v\}}(-1)^{|J_2|}I_{G-J_2}(x)\\
&=&-0+0\\
&=&0,
\end{eqnarray*}
which completes the proof.

\end{proof}

\section{Computation of the interior polynomial}\label{sec:computation}

In this section, we explain the recursion formula by presenting several concrete examples. Given a bipartite graph with color classes $V$ and $W$, we consider the color class with more vertices as the non-expanding set. By applying the main theorem, we obtain the following corollary.

\begin{cor}\label{rec}
Let $G=(V\sqcup W, E)$ be a bipartite graph with vertex set $V\sqcup W$ separated by color. If $|V| \geq |W|$, we have
\[
I_G(x)=\sum_{\emptyset \ne J\subseteq V}(-1)^{|J|-1}I_{G- J}(x),
\]
where $G-J$ is the bipartite graph obtained from $G$ by deleting all vertices in $J$ and all incident edges.
\end{cor}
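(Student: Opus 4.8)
The plan is to derive Corollary~\ref{rec} directly from Theorem~\ref{main} by choosing the non-expanding set to be all of $V$. First I would observe that the hypothesis $|V|\ge|W|$ immediately gives $|V|\ge|W|\ge|N_G(V)|$, since $N_G(V)\subseteq W$. Hence $S=V$ is a non-expanding set of $G$ in the sense of the definition in Section~\ref{sec:specialproof}, and Theorem~\ref{main} applies with this choice of $S$.

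Applying the theorem yields
\[
\sum_{J\subseteq V}(-1)^{|J|}I_{G-J}(x)=0.
\]
The next step is simply to isolate the $J=\emptyset$ term: that term equals $(-1)^0 I_{G-\emptyset}(x)=I_G(x)$. Moving all remaining terms (those with $\emptyset\ne J\subseteq V$) to the other side of the equation and multiplying through by $-1$ gives
\[
I_G(x)=-\sum_{\emptyset\ne J\subseteq V}(-1)^{|J|}I_{G-J}(x)=\sum_{\emptyset\ne J\subseteq V}(-1)^{|J|-1}I_{G-J}(x),
\]
which is exactly the asserted formula.

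There is essentially no obstacle here; the only point that warrants a line of comment is that the definition of non-expanding set in this paper is stated for subsets $S\subseteq V$, so one should note that $V$ itself is an admissible choice (the inequality being vacuously or trivially satisfied when $|V|\ge|W|$), and that $G-V$ and all intermediate $G-J$ make sense as bipartite graphs with one color class possibly empty, for which the interior polynomial is defined via Definition~\ref{thm:disserint}. After that, the corollary is a one-line rewriting of Theorem~\ref{main}, so I would keep the proof to two or three sentences.
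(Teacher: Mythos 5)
Your proposal is correct and matches the paper's own derivation: the paper likewise takes the larger color class $V$ as the non-expanding set (noting $N_G(V)\subseteq W$ so $|V|\ge|W|\ge|N_G(V)|$), applies Theorem~\ref{main}, and isolates the $J=\emptyset$ term. Nothing further is needed.
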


At first, consider a bipartite graph consisting of isolated vertices (i.e., with no edges).As stated in \cite{kato}, if $G$ consists of $c(G)$ isolated vertices, then
\[
I_G(x) = (1-x)^{c(G)-1}.
\]
This is also consistent with the polyhedral definition given in Definition \ref{thm:disserint}.

\begin{example}
Let $H_{23}$ be the bipartite graph shown in Figure \ref{h23}. By applying Corollary \ref{rec} with either color class as the non-expanding set, the interior polynomial of $H_{23}$ can be computed inductively. We note that the computation follows a similar process regardless of which color class is chosen as the non-expanding set.

\begin{figure}[H]
\centering
\includegraphics[width=2cm]{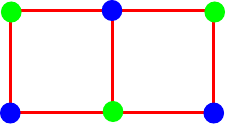}
\caption{a grid graph $H_{23}=P_{2}\times P_{3}$.}
\label{h23}
\end{figure}

\begin{table}[htbp]
\centering
\caption{A computation of the interior polynomial.}\label{tab:rec}
\begin{tabular}{lllllllll}
& &  & & & $(-1)^{|J|-1}I_{G - J}$ \\
\hline 
\\

\vspace{10pt}

&\hspace{40pt}\includegraphics[width=1cm]{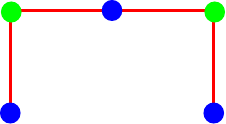}
&                                       &
&         
&\raisebox{+8pt}{\hspace{3pt}$1$}   
&\raisebox{+8pt}{$\times 1$}\\

\vspace{10pt}

& \includegraphics[width=1cm]{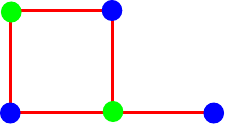}
& \includegraphics[width=1cm]{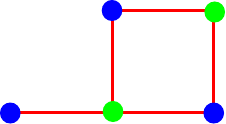}& 
&&\raisebox{+8pt}{$(1+1x)$}
& \raisebox{+8pt}{$\times 2$}\\

\vspace{10pt}

&\hspace{40pt}\includegraphics[width=1cm]{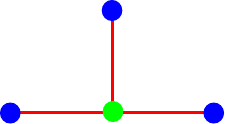}
&                                       &
&\raisebox{+8pt}{$-$\hspace{-10pt}}&\raisebox{+8pt}{$1$}
&\raisebox{+8pt}{$\times 1$}\\

 \vspace{10pt}

& \includegraphics[width=1cm]{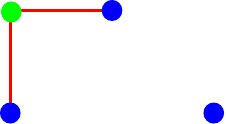}
& \includegraphics[width=1cm]{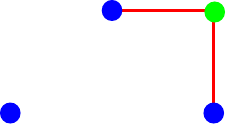}& 
& \raisebox{+8pt}{$-$\hspace{-10pt}}
&\raisebox{+8pt}{$(1-x)$}   
           & \raisebox{+8pt}{$\times 2$}\\

 \vspace{10pt}

&\hspace{40pt} \includegraphics[width=1cm]{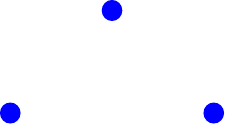}
& & 
& &\raisebox{+8pt}{$1-2x+x^2$}
           & \raisebox{+8pt}{$\times 1$}\\

\hline \\
 & &\hspace{-50pt}$\displaystyle I_{H_{23}}(x)$&$=$& &\hspace{3pt}$1+2x+x^2$
\end{tabular}
\end{table}

\end{example}

Next, we compute the interior polynomial of a complete bipartite graph. Let $K_{m,n}$ be a complete bipartite graph with color classes of sizes $m$ and $n$. We assume $m \le n$ without loss of generality, as the interior polynomial does not depend on the choice of color classes in the bipartite graph. 

The bipartite graph $K_{m,n} = (V \cup W, E)$ has $|V| = m$ and $|W| = n$. Since $m \le n$, the neighborhood of $W$ is exactly $V$, meaning that $|N_{K_{m,n}}(W)| = m$. Thus, we identify $W$ as a non-expanding set. Applying Corollary \ref{rec}, we obtain  
\[
I_G(x) = \sum_{\emptyset \ne J \subseteq W} (-1)^{|J| - 1} I_{K_{m,n} - J}(x).
\]
For any subset $ J \subseteq W $, deleting all vertices in $ J $ from $ W $ together with their incident edges yields a graph that is isomorphic to $ K_{m, n - |J|} $. Setting $ k = |J| $, we rewrite the formula as
\begin{equation}\label{combiprec}
I_{K_{m,n}}(x) = \sum_{k=1}^{n} (-1)^{k-1} \binom{n}{k} I_{K_{m,n-k}}(x).
\end{equation}
This result was previously obtained in \cite{GJ,K}. However, it also follows naturally from our approach.

\begin{theorem}\label{thm:complete}
Let $K_{m,n}$ be the complete bipartite graph. Then the interior polynomial of $K_{m,n}$ is given by
\[
I_{K_{m,n}}(x)=\sum_{j=0}^{\min\{m-1,n-1\}}\binom{m-1}{j}\binom{n-1}{j}x^j.
\]
\end{theorem}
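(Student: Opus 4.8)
The plan is to prove the closed formula by induction on $m+n$, using the recursion \eqref{combiprec} together with the base cases supplied by the isolated-vertex computation. The base case is $m=1$ (or symmetrically $n=1$): the graph $K_{1,n}$ is a star, whose root polytope is a simplex of dimension $n-1$, so $\Ehr_{Q_{K_{1,n}}}(x) = 1/(1-x)^{n}$ and hence $I_{K_{1,n}}(x)=1$. This matches the claimed formula, since $\min\{0,n-1\}=0$ and $\binom{0}{0}\binom{n-1}{0}=1$. Having fixed the base case, I would assume the formula holds for all complete bipartite graphs with fewer than $m+n$ vertices and prove it for $K_{m,n}$ with $2\le m\le n$.

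For the inductive step, substitute the inductive hypothesis into \eqref{combiprec}. Writing $I_{K_{m,n-k}}(x)=\sum_{j\ge 0}\binom{m-1}{j}\binom{n-1-k}{j}x^j$ (valid for $1\le k\le n-1$; the $k=n$ term contributes $I_{K_{m,0}}(x)=(1-x)^{m-1}$, which I will handle together with the rest), we get
\[
I_{K_{m,n}}(x)=\sum_{j\ge 0}x^j\sum_{k=1}^{n}(-1)^{k-1}\binom{n}{k}\binom{m-1}{j}\binom{n-1-k}{j},
\]
where the convention $\binom{n-1-k}{j}$ is interpreted appropriately when $n-1-k<0$ so that the $k=n-1$ and $k=n$ terms align with $I_{K_{m,1}}=1$ and $I_{K_{m,0}}=(1-x)^{m-1}$; making this bookkeeping precise is one thing to check carefully. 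Pulling out $\binom{m-1}{j}$, it remains to show the inner sum equals $\binom{n-1}{j}$ for each $j$, i.e.,
\[
\binom{n-1}{j}=-\sum_{k=1}^{n}(-1)^{k}\binom{n}{k}\binom{n-1-k}{j}.
\]
Equivalently, moving the $k=0$ term across, this is the identity $\sum_{k=0}^{n}(-1)^k\binom{n}{k}\binom{n-1-k}{j}=0$, which is a standard binomial identity provable by, e.g., the finite-difference operator: $\sum_k(-1)^k\binom{n}{k}P(k)=0$ for any polynomial $P$ of degree $<n$, and $k\mapsto\binom{n-1-k}{j}$ is (up to sign) a polynomial in $k$ of degree $j\le m-1\le n-1<n$. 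Alternatively one can use the Vandermonde/negated-upper-index form $\binom{n-1-k}{j}=(-1)^j\binom{j+k-n}{j}$ and a generating-function argument. I would present the finite-difference version as the cleanest route.

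The main obstacle I anticipate is not the core binomial identity — that is routine — but the boundary bookkeeping: the recursion \eqref{combiprec} runs $k$ all the way to $n$, where $K_{m,0}$ is a graph of $m$ isolated vertices with $I_{K_{m,0}}(x)=(1-x)^{m-1}\ne 1$, so this term does not fit the generic formula $\sum_j\binom{m-1}{j}\binom{n-1-k}{j}x^j$ unless one checks that the natural polynomial extension of $\binom{n-1-k}{j}$ to $k=n$ (and $k=n-1$) reproduces exactly the coefficients of $(1-x)^{m-1}$ (resp.\ of $1$). Concretely, for $k=n$ one needs $\sum_j\binom{m-1}{j}\binom{-1}{j}x^j=\sum_j\binom{m-1}{j}(-1)^jx^j=(1-x)^{m-1}$, which works with the convention $\binom{-1}{j}=(-1)^j$; and for $k=n-1$, $\binom{0}{j}=\delta_{j,0}$ gives $1$. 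Once these two endpoint identifications are verified, the single clean identity above closes the induction, and the stated upper limit $j\le\min\{m-1,n-1\}$ falls out because $\binom{m-1}{j}$ kills $j>m-1$ while $\binom{n-1}{j}$ kills $j>n-1$. A brief remark reconciling this with the $I_{G_1\cup G_2}$ product rule from the preliminaries would make the endpoint argument fully transparent.
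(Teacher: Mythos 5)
Your proposal is correct and follows essentially the same route as the paper: both substitute the inductive hypothesis into the recursion \eqref{combiprec} and reduce the claim to the identity $\sum_{k=1}^{n}(-1)^{k-1}\binom{n}{k}\binom{n-k-1}{j}=\binom{n-1}{j}$, with the same endpoint conventions $\binom{-1}{j}=(-1)^{j}$ and $\binom{0}{j}=\delta_{j,0}$ absorbing the $K_{m,0}$ and $K_{m,1}$ terms. The only divergence is in how that binomial identity is established: you invoke the vanishing of the $n$-th finite difference of a polynomial of degree less than $n$ (valid here since $j\le m-1\le n-1<n$), which is shorter and arguably cleaner than the paper's induction on $n$ via Pascal's rule, but both arguments are sound.
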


\begin{proof}
First, we may assume $m \le n$ without loss of generality, since the interior polynomial does not depend on the choice of the color classes in the bipartite graph. We use induction on $(m,n)$ ordered lexicographically.

We first establish the base cases.  
For the degenerate case $K_{m,0}$ (i.e., one part is empty), the graph consists of $m$ isolated vertices, so we have
\[
I_{K_{m,0}}(x)=(1-x)^{m-1}.
\]
Using the generalized coefficient convention, where $\binom{-1}{j}=(-1)^j$, this formula can be rewritten as
\[
\sum_{j=0}^{m-1}\binom{m-1}{j}\binom{-1}{j}x^j=\sum_{j=0}^{m-1}\binom{m-1}{j}(-1)^j x^j=(1-x)^{m-1}.
\]
Thus, the formula holds for $K_{m,0}$.

Next, consider the case $m=1$. The graph $K_{1,n}$ is a tree, so we have
\[
I_{K_{1,n}}(x)=\sum_{j=0}^{0}\binom{0}{j}\binom{n-1}{j}x^j=1.
\]

Now, assume that for all pairs $(m',n')$ such that either $m'<m$ or $m'=m$ and $n'<n$, the interior polynomial satisfies
\[
I_{K_{m',n'}}(x)=\sum_{j=0}^{m'-1}\binom{m'-1}{j}\binom{n'-1}{j}x^j.
\]
We proceed to prove the formula for $K_{m,n}$ using the recurrence relation \eqref{combiprec}. Applying the induction hypothesis, we derive
\begin{eqnarray*}
I_{K_{m,n}}(x) &=& \sum_{k=1}^{n} (-1)^{k-1} \binom{n}{k} I_{K_{m,n-k}}(x)\\
&=&\sum_{k=1}^{n}(-1)^{k-1}\binom{n}{k}\left(\sum_{j=0}^{m-1}\binom{m-1}{j}\binom{n-k-1}{j}x^j\right)\\
&=&\sum_{j=0}^{m-1}\binom{m-1}{j}x^j\left(\sum_{k=1}^{n}(-1)^{k-1}\binom{n}{k}\binom{n-k-1}{j}\right).
\end{eqnarray*}

To complete the proof, we need to establish the following identity involving binomial coefficients. For any integer $0 \le j \le n-1$, we have
\begin{equation}\label{eq:binom_identity}
\sum_{k=1}^{n}(-1)^{k-1}\binom{n}{k}\binom{n-k-1}{j}=\binom{n-1}{j}.
\end{equation}

We prove the identity by induction on $n$. First, consider the base case $n=1$. The only possible value of $j$ is $j=0$. In this case, both sides evaluate to 1. Thus, the identity holds for $n=1$.

Now, suppose that for a fixed $n \geq 1$ and for all $0 \le j \le n-1$, the equality
\[
\sum_{k=1}^{n}(-1)^{k-1}\binom{n}{k}\binom{n-k-1}{j} = \binom{n-1}{j}
\]
holds.

For $ n+1$, let $S_{(n+1,j)}$ denote the left-hand side. Then, we compute
\begin{eqnarray*}
S_{(n+1,j)} &=& \sum_{k=1}^{n+1} (-1)^{k-1}\binom{n+1}{k}\binom{n+1-k-1}{j} \\
&=& \sum_{k=1}^{n+1} (-1)^{k-1}\binom{n+1}{k}\binom{n-k}{j}  \\
&=& \sum_{k=1}^{n} (-1)^{k-1}\binom{n+1}{k}\binom{n-k}{j} + (-1)^{n+j}.
\end{eqnarray*}
Using the identity $\binom{n+1}{k} = \binom{n}{k} + \binom{n}{k-1}$, we obtain
\[
S_{(n+1,j)} = \sum_{k=1}^{n} (-1)^{k-1} \left( \binom{n}{k} + \binom{n}{k-1} \right) \binom{n-k}{j} + (-1)^{n+j}.
\]
Splitting the sum, we rewrite it as
\[
S_{(n+1,j)} = S' + S'' + (-1)^{n+j},
\]
where
\[
S' = \sum_{k=1}^{n} (-1)^{k-1} \binom{n}{k} \binom{n-k}{j}, \quad
S'' = \sum_{k=1}^{n} (-1)^{k-1} \binom{n}{k-1} \binom{n-k}{j}.
\]
For $S'$, using the identity $\binom{n-k}{j} = \binom{n-k-1}{j} + \binom{n-k-1}{j-1}$, we obtain
\[
S' = \sum_{k=1}^{n} (-1)^{k-1} \binom{n}{k} \binom{n-k-1}{j}
+ \sum_{k=1}^{n} (-1)^{k-1} \binom{n}{k} \binom{n-k-1}{j-1}.
\]
By the induction hypothesis, we have
\[
S' = \binom{n-1}{j} + \binom{n-1}{j-1} = \binom{n}{j}.
\]
For $S''$, changing variables by setting $l=k-1$, so that $l$ runs from $0$ to $n-1$, we obtain
\begin{eqnarray*}
S'' 
&=& \sum_{l=0}^{n-1} (-1)^{l}\binom{n}{l}\binom{n-l-1}{j} \\
&=&(-1)^{0}\binom{n}{0}\binom{n-0-1}{j}+\sum_{l=1}^{n} (-1)^{l}\binom{n}{l}\binom{n-l-1}{j}-(-1)^{n}\binom{n}{n}\binom{n-n-1}{j} \\
&=&\binom{n-1}{j}-\sum_{l=1}^{n} (-1)^{l-1}\binom{n}{l}\binom{n-l-1}{j}-(-1)^n\binom{-1}{j}
\end{eqnarray*}
By the induction hypothesis, we have
\[
S''=\binom{n-1}{j}-\binom{n-1}{j}-(-1)^{n+j}=-(-1)^{n+j}
\]
Combining all terms, we obtain
\[
S_{(n+1, j)} =S' + S'' +(-1)^{n+j} 
= \binom{n}{j}-(-1)^{n+j} +(-1)^{n+j} 
= \binom{n}{j}.
\]
Thus, we have established the identity \eqref{eq:binom_identity}, completing the proof.

From \eqref{eq:binom_identity}, we obtain 
\[
I_{K_{m,n}}(x)=\sum_{j=0}^{m-1}\binom{m-1}{j}\binom{n-1}{j}x^j.
\]
Using induction on $(m,n)$ ordered lexicographically, we conclude that the stated formula for the interior polynomial of $K_{m,n}$ holds for all $ m \le n$.
\end{proof}

\end{document}